\newtheorem{theorem}{\bf Theorem}[section]
\newtheorem{lemma}{\bf Lemma}[section]
\newtheorem{corollary}{\bf Corollary}[section]
\theoremstyle{remark}
\newtheorem{definition}{\bf Definition}[section]
\newcommand{\grad}{\textrm{grad}\,}
\newcommand{\h}{\textrm{H}}
\newcommand{\V}{\textrm{Vol}}
\begin{document}
\title{Geometry of generalized
virtual polyhedra}

\author{Askold Khovanskii \thanks{The work was partially supported by the Canadian Grant No. 156833-17. }}
%\address{Department of Mathematics, University of Toronto, Toronto, Canada
%}
%\email{askold@math.utoronto.ca}
%\keywords{multiplicity, local ring, Hilbert-Samuel function, convex body}
%\subjclass[2010]{Primary: 13H15, 11H06; Secondary: 13P10}

%\date{\today}

\maketitle

\begin{abstract}{Partial generalizations of virtual polyhedra theory (sometimes under different names)  appeared recently   in the theory of torus manifolds. These generalizations look very different from the original virtual polyhedra theory.  They are based on simple arguments from homotopy theory while the original theory is based on integration over Euler characteristic. In the paper we explain how these generalizations are related to the classical theory of convex bodies and to the original  virtual polyhedra theory. The paper basically contains no proofs: all proofs and all details can be found in the cited literature.
The paper is based on my talk  dedicated to V. I. Arnold's 85-th anniversary at the International Conference on Differential Equations and Dynamical Systems 2022
(Suzdal).}\end{abstract}

\section{Introduction. Virtual convex polyhedra and their polynomial measures}

Convex polyhedra in the linear space $\mathbb R^n$ form a convex cone in the following way. One can multiply a convex polyhedron $\Delta$   by any nonnegative real number $\lambda$  (i.e. take its  dilatation $\lambda \Delta$  centred at the origin with the factor $\lambda$) and add two convex polyhedra $ \Delta_1, \Delta_2$  in Minkowski sense. Recall that the Minkowski sum of $\Delta_1, \Delta_2\subset \mathbb R^n$ is the set $\Delta$ of the points $z$ representable in the form $z=x+y$, where $x\in \Delta_1, y\in \Delta_2$.

A convex chain is a function on $\mathbb R^n$ representable as a finite linear combination with real coefficients of characteristic functions of closed convex polyhedra (of different dimensions).

Convex chains form a real vector space in a natural way. One can further define a product $f*g$ of two chains $f,g$ as follows. If $f$ and $g$ are characteristic functions of closed  convex polyhedra $\Delta_1, \Delta_2\subset \mathbb R^n$ then, by definition, the chain  $f*g$ is the characteristic function of $\Delta=\Delta_1+\Delta_2$ (where the addition is understood in the Minkowski sense). This product can be extended by linearity to the space of convex chains.

Note that it is not obvious at all that the above product is well defined. Indeed, convex chain can be represented as a linear combination of characteristic functions in many different ways, and independents of product $f*g$ of such representations of $f$ and $g$ is not obvious. One can prove \cite{1} that the product is well defined using as the tool integration over Euler characteristic \cite{2}.

Convex chains in $\mathbb R^n$  with the multiplication $*$ form a real algebra with the identity element~$\mathbf 1$, which is the characteristic function of the origin in $\mathbb R^n$. The characteristic function $\chi_\Delta$ of a closed convex polyhedron $\Delta\subset \mathbb R^n$ are invertible in the algebra of convex chains. More precisely, the following theorem holds.

\begin{theorem} Let $\Delta\subset \mathbb R^n$ be a convex polyhedron and let $-\Delta_0$ be the set of interior points  (in the intrinsic  topology of $\Delta$) of the polyhedron $-\Delta$ symmetric to $\Delta$ with respect to the origin. Then
$$
(-1)^{\dim \Delta}\chi_{-\Delta_0} * \chi_\Delta =\mathbf 1.
$$
In other words, the convex chain $(-1)^{\dim \Delta}\chi_{-\Delta_0}$ is inverse to $\Delta$ with respect to the addition in Minkowski sense (extended to the space of convex chains).
\end{theorem}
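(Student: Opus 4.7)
The plan is to invoke the Euler-integration representation of the convex chain product. For closed polyhedra $A,B$ one has $(\chi_A * \chi_B)(z) = \chi\bigl(A \cap (z - B)\bigr)$ with $\chi$ the additive Euler characteristic, and, after writing $\chi_{-\Delta_0}$ as a M\"obius-type linear combination of characteristic functions of closed faces of $-\Delta$, the formula extends by linearity. The theorem therefore reduces to the pointwise identity
$$\chi\bigl((-\Delta_0)\cap(z-\Delta)\bigr) \;=\; (-1)^{\dim \Delta}\cdot[z=0].$$
At $z=0$ the intersection is $-\Delta_0$ itself, a relatively open cell of dimension $\dim \Delta$, hence of Euler characteristic $(-1)^{\dim \Delta}$, which yields $1$ after multiplication by $(-1)^{\dim \Delta}$.

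For $z\neq 0$ the vanishing follows from additivity of $\chi$ applied to the decomposition $-\Delta = (-\Delta_0)\sqcup\partial(-\Delta)$ intersected with $z-\Delta$:
$$\chi\bigl((-\Delta_0)\cap(z-\Delta)\bigr) \;=\; \chi\bigl((-\Delta)\cap(z-\Delta)\bigr) \;-\; \chi\bigl(\partial(-\Delta)\cap(z-\Delta)\bigr).$$
If $(-\Delta)\cap(z-\Delta)$ is empty, both terms vanish; otherwise it is a nonempty closed convex polyhedron whose Euler characteristic equals $1$. The required vanishing at $z\neq 0$ is thus equivalent to $\chi\bigl(\partial(-\Delta)\cap(z-\Delta)\bigr)=1$, i.e.\ to the contractibility of $\partial(-\Delta)\cap(z-\Delta)$ whenever this set is nonempty.

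This last claim is the geometric heart of the proof. Setting $K=-\Delta$, the set $\partial K\cap(K+z)$ consists of those boundary points $y\in K$ such that the segment $[y-z,y]$ lies in $K$---the ``upper cap'' of $\partial K$ with respect to the direction $z$, i.e.\ the boundary points whose ray in the $-z$ direction stays in $K$ for distance at least $\|z\|$. I would exhibit a deformation retract of this cap onto the top face $F^+=\{y\in K:\langle z,y\rangle=\max_K\langle z,\cdot\rangle\}$, itself a convex polytope and hence contractible, by pushing each boundary point along the tangential gradient of $\langle z,\cdot\rangle$ on $\partial K$ until it reaches $F^+$. The main obstacle will be making this retraction rigorous and verifying it stays within $\partial K\cap(K+z)$: the needed monotonicity, namely that pushing $y$ along a tangent direction $v$ with $\langle z,v\rangle\geq 0$ preserves the condition $y-z\in K$, follows from convexity of $K$. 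Granted the lemma, $\chi\bigl(\partial K\cap(K+z)\bigr)=\chi(F^+)=1$ for $z\neq 0$, so $\chi\bigl((-\Delta_0)\cap(z-\Delta)\bigr)=1-1=0$, completing the proof.
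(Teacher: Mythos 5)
Your overall strategy is the intended one: the paper itself states this theorem without proof (deferring to the cited literature on integration over Euler characteristic), and there the argument is exactly the convolution formula $(f*g)(z)=\int f(x)g(z-x)\,d\chi(x)$ together with additivity of $\chi$, leading to the reduction you give: for $z\neq 0$ with $D:=K\cap(K+z)\neq\emptyset$, $K=-\Delta$, one must show $\chi\bigl(\partial K\cap(K+z)\bigr)=1$. However, the geometric lemma you propose to finish with is false as stated. The face $F^+=\{y\in K:\langle z,y\rangle=\max_K\langle z,\cdot\rangle\}$ need not be contained in the cap $\partial K\cap(K+z)$, so no deformation retraction of the cap onto $F^+$ can exist. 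Take $K$ to be the triangle with vertices $(0,0)$, $(1,0)$, $(0,1)$ and $z=(1/2,1/2)$: then $D=\{(1/2,1/2)\}$ and the cap is that single point, while $F^+$ is the entire edge from $(1,0)$ to $(0,1)$; indeed $(1,0)\in F^+$ but $(1,0)-z=(1/2,-1/2)\notin K$. The same example kills your monotonicity claim: moving from $(1/2,1/2)$ along the edge direction $v=(1,-1)$ has $\langle z,v\rangle=0$, yet the condition $y-z\in K$ fails immediately. You also silently assume the cap is nonempty whenever $D$ is; this needs proof, since an empty cap would give $\chi=1-0=1\neq 0$.

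The statement you are after is nevertheless true, and the argument can be repaired. The correct retraction target is the face of $D$ (not of $K$) maximizing $\langle z,\cdot\rangle$: any maximizer $y$ of $\langle z,\cdot\rangle$ on $D$ must lie on $\partial K$, because $y\in\mathrm{relint}\,K$ together with $y-z\in K$ would give $y+\epsilon z\in D$ for small $\epsilon>0$ by convexity, contradicting maximality; this also settles nonemptiness of the cap. For contractibility, project along $z$: writing each fiber $K\cap(w+\mathbb{R}z)$ as $\{w+tz:\alpha(w)\le t\le\beta(w)\}$, the fiber of the cap over $w$ is the single top point $t=\beta(w)$ when $w$ is interior to the shadow of $K$, and the segment $[\alpha(w)+1,\beta(w)]$ when the whole fiber lies in $\partial K$; pushing each fiber to $t=\beta(w)$ retracts the cap onto the graph of $\beta$ over the convex set $\{w:\beta(w)-\alpha(w)\ge 1\}$, which is contractible. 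Finally, for $\dim\Delta<n$ you must work with the relative boundary and dispose of $z$ outside the direction space of $\Delta$ (where everything is empty) and of the trivial case $\dim\Delta=0$. With these repairs the proof is complete; as written, the key step does not go through.
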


Algebra of convex chains contains the multiplicative subgroup generated by characteristic functions of closed convex polyhedra. Elements of that group are called virtual polyhedra in $\mathbb R^n$.

Let us fix closed convex polyhedra $\Delta_1,\dots, \Delta_k\subset \mathbb R^n$.  For any  $k$-tuple of  nonnegative integral numbers  $\mathbf n=( n_1,\dots,n_k)$  one can defined the polyhedron $\Delta(\mathbf n)=\sum n_i\Delta_i$.

The following sentence can be considered as a slogan of virtual polyhedra theory: ``A natural  continuation of the function $\Delta(\mathbf n)$ (whose values are convex polyhedra) to $k$-tuples $\mathbf n=(n_1,\dots,n_k)$ of integral numbers (some of which could be negative) is a convex chain $\tilde \Delta(\mathbf n)$ defined by the following formula

$$ \tilde \Delta(\mathbf n)=\chi^{n_1}_{\Delta_1}* \dots * \chi^{n_k}_{\Delta_k}.$$

This slogan has a following justification: value of a polynomial measures (see an example of such measure below) on a chain $\tilde \Delta(\mathbf n)$ is a polynomial of $\mathbf n$. Generalizations of virtual polyhedra theory suggest other families of cycles depending on parameters, such that integrals against such cycles of a differential form with polynomial coefficients are polynomial in parameters.

Let us present an example of a polynomial measure on convex polyhedra with integral vertices and a justification of the slogan of virtual polyhedra theory. Let $P:\mathbb R^n\to \mathbb R$ be a polynomial of degree $m$.
With $P$ one can associate the following measure $\mu$ on convex polyhedra $\Delta$ with integral vertices: $\mu(\Delta)=\sum_{x\in \mathbb Z^n\cap \Delta}P(x)$. One can prove that the function $\mu(\Delta(\mathbf n)$ is a degree $\leq (n+m)$ polynomial on $k$-tuples $\mathbf n$ of nonnegative  integral  numbers.

The following Theorem justifies the slogan of virtual polyhedra theory.

\begin{theorem} Let $P$ be a polynomial of degree $m$ and let $\tilde F(\mathbf n)$ be the function on $k$-tuples $\mathbf n=(n_1,\dots,n_k)$ of integral numbers (which could be negative) defined by the formula
$$
\tilde F(\mathbf n)=\sum _{x\in Z^n} \chi_{\Delta_1}^{n_1}(x)*\dots *\chi_{\Delta_k}^{n_k}(x)P(x).
$$
Then $\tilde F(\mathbf n)$  is a degree $\leq (n+m)$ polynomial on $k$-tuple $\mathbf n$ which coincides with $F(\mathbf n)$ on $k$-tuples with nonnegative components.
\end{theorem}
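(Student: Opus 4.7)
The plan is to first verify agreement $\tilde F=F$ on the nonnegative orthant, and then to upgrade this to equality on all of $\mathbb Z^k$ via Theorem 1.1 and Ehrhart--Macdonald reciprocity for polynomial weights.

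For $\mathbf n\in\mathbb Z_{\geq 0}^k$, iterating $\chi_{\Delta'}*\chi_{\Delta''}=\chi_{\Delta'+\Delta''}$ gives $\chi_{\Delta_1}^{n_1}*\cdots*\chi_{\Delta_k}^{n_k}=\chi_{\Delta(\mathbf n)}$, so $\tilde F(\mathbf n)=\mu(\Delta(\mathbf n))=F(\mathbf n)$. By hypothesis this is a polynomial of degree $\leq n+m$ on the nonnegative orthant, hence has a unique polynomial extension $G:\mathbb Z^k\to\mathbb R$ of the same total degree, and the theorem reduces to proving $\tilde F=G$ on all of $\mathbb Z^k$.

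Fix all but one coordinate, say $n_i$, at arbitrary integer values. Expanding the other factors (using Theorem 1.1 on negative coordinates and inclusion--exclusion on faces to write relative-interior characteristic chains as signed sums of closed-polytope ones) yields a finite linear combination $\prod_{j\neq i}\chi_{\Delta_j}^{n_j}=\sum_\alpha c_\alpha\chi_{\Omega_\alpha}$. For $n_i\geq 0$, $\chi_{\Delta_i}^{n_i}*\chi_{\Omega_\alpha}=\chi_{n_i\Delta_i+\Omega_\alpha}$, so $\tilde F(\mathbf n)=\sum_\alpha c_\alpha\sum_{x\in\mathbb Z^n\cap(n_i\Delta_i+\Omega_\alpha)}P(x)$ is polynomial in $n_i$ of degree $\leq n+m$ by standard polynomial-weighted Ehrhart theory. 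For $n_i<0$, Theorem 1.1 gives
\[
\chi_{\Delta_i}^{n_i}=(-1)^{\dim\Delta_i}\chi_{(n_i\Delta_i)_0},
\]
producing signed relative-interior counts that, by Ehrhart--Macdonald reciprocity with polynomial weight $P$, match the polynomial continuation to $n_i<0$ of the $n_i\geq 0$ formula. Thus $n_i\mapsto\tilde F(\mathbf n)$ is polynomial of degree $\leq n+m$ for any fixed integer values of the other coordinates.

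A coordinate-at-a-time induction now forces $\tilde F=G$ on all of $\mathbb Z^k$: fixing $n_1,\dots,n_{k-1}\geq 0$, the polynomials $\tilde F(n_1,\dots,n_{k-1},\cdot)$ and $G(n_1,\dots,n_{k-1},\cdot)$ in $n_k$ agree at every nonnegative integer, hence everywhere; one then releases the nonnegativity constraint on $n_{k-1}$ and so on. In particular $\tilde F$ is globally polynomial of total degree $\leq n+m$, as claimed. The main obstacle is the reciprocity step: for constant weight it is classical Ehrhart--Macdonald, and for polynomial $P$ it follows from the same generating-function calculation; alternatively, one can execute the entire argument inside the algebra of convex chains, where the equality emerges directly from the invertibility of $\chi_\Delta$ proved in Theorem 1.1, which is how it is handled via integration over Euler characteristic in \cite{1}.
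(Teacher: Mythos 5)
A preliminary caveat: this is a survey and the paper contains no proof of this theorem at all --- it is stated as a special case of the general theory of polynomial measures on virtual polytopes developed in \cite{1}, where the proofs run through the algebra of convex chains and integration over Euler characteristic. So your proposal has to be judged on its own merits rather than against an argument in the text.

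Your architecture (agreement on the nonnegative orthant, then a one-variable-at-a-time extension) is sensible, and you correctly isolate the crux, but the crux is not actually closed. The step that fails as written is the assertion that, for $n_i<0$, summing $P$ against the chain $\chi_{\Delta_i}^{n_i}*\chi_{\Omega_\alpha}$ reproduces the polynomial continuation ``by Ehrhart--Macdonald reciprocity with polynomial weight.'' Classical Ehrhart--Macdonald reciprocity concerns dilates $t\Delta$ of a single polytope, i.e.\ all dilation parameters negated simultaneously (equivalently, a point offset). What you need is a reciprocity for the Minkowski sum $n_i\Delta_i+\Omega_\alpha$ in which only one summand is dilated and the offset $\Omega_\alpha$ is a full-dimensional polytope, itself produced from the other (possibly negative) exponents. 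Note that $\chi_{(n_i\Delta_i)_0}*\chi_{\Omega_\alpha}$ is not the indicator of a relative interior of anything simple: expanding $\chi_{(n_i\Delta_i)_0}=\sum_{F}(-1)^{\dim\Delta_i-\dim F}\chi_{n_iF}$ over the faces $F$ of $\Delta_i$ turns the required identity into the statement that an alternating sum of weighted lattice counts over the sets $n_iF+\Omega_\alpha$ is the analytic continuation of $\sum_{x\in\mathbb Z^n\cap(n_i\Delta_i+\Omega_\alpha)}P(x)$ --- and that statement is essentially the content of the theorem, not a citation to a classical fact. Your closing remark, that one can instead ``execute the entire argument inside the algebra of convex chains\dots as in \cite{1},'' points at where the real proof lives (the convolution formula $(f*g)(z)=\int f(x)g(z-x)\,d\chi(x)$ and the resulting mixed-dilation reciprocity), but invoking it turns the proposal into a deferral rather than a proof. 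A smaller point: the polynomiality of $F$ on the nonnegative orthant, which you take ``by hypothesis,'' is itself part of the package being established (the paper asserts it without proof just before the theorem), so a self-contained argument would have to supply it too.
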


Virtual polyhedra theory allows to develop a general theory of polynomial finite additive measures on convex polyhedra (see \cite{1}), which contains wide generalizations of the above theorem.

The virtual polyhedra  theory  was motivated by cohomology theory of complete toric varieties, with coefficients in sheafs invariant under the torus action. In particular it provides a combinatorial version of Riemann--Roch theorem for such varieties \cite{3}, which also could be considered as  a multidimensional version of the classical Euler--MacLuren formula (see \cite{3}).

The general theory is applicable  to singular  polynomial measures on polyhedra (such as the measure, which associates to a polyhedron the number of integral points in  it) which could take nonzero value on polyhedra $\Delta$ with $\dim \Delta<n$. However, if one is interested in nonsingular polynomial measures, which vanish on polyhedra whose dimension is smaller than $n$, one can totally neglect all polyhedra of dimension $<n$ in convex chains. This leads to a significant simplification  of virtual polyhedra theory, which captures smooth polynomial measures  (an which is not appropriate for studying   singular measures).

 Simplified  theory  is still  useful. In particular, it allows to provide a topological proof of Bernstein--Koushnirenko--Khovanskii (BKK) theorem. More generally, using a description of algebras with Poincare duality (see for example \cite[Section 6]{4} or \cite{5}) it allows to describe the cohomology ring  $H^*(M,\mathbb Z)$ of a smooth complete toric variety $M$ in terms of volume function on virtual integral convex polyhedra (so-called Khovanskii--Pukhlikov description of the ring $H^*(M,\mathbb Z)$).

 In this paper we only deal with simplified versions of the virtual polyhedra theory which deal only with nonsingular measures  as well as its generalizations. We also mention some topological applications of these generalizations. We start with geometric meaning of a virtual convex body and its volume for the difference of two strictly convex bodies with smooth boundaries. We also will present some applications of mixed volume and virtual polyhedra in algebra.

\section{Virtual strictly convex bodies and their volumes}

Formal virtual convex body is a formal difference of compact convex bodies (which in general are not polyhedra).

Similar to polyhedra, compact convex bodies in $\mathbb R^n$ form a convex cone with respect to Minkowski addition and dilation with positive factors centered at the origin. Moreover, the addition of convex bodies satisfies the cancelation property, i.e. if for a convex body $\Delta$ the identity $\Delta_1+\Delta=\Delta_2+\Delta$ implies that $\Delta_1=\Delta_2$. Hence one can generate a group by formal differences of convex bodies with
$\Delta_1-\Delta_2=\Delta_3-\Delta_4$ whenever $\Delta_1+\Delta_4=\Delta_3+\Delta_2$.

By Minkovsky's Theorem, the volume is a homogeneous degree $n$ polynomial on the cone of convex bodies. More concretely,
if $\Delta_1, \Delta_2$ are convex bodies,  $\lambda,\mu\geq0$
then the volume $\V(\lambda\Delta_1+\mu \Delta_2)$ is a homogeneous degree $n$ polynomial in $(\lambda,\mu)$. Therefore, the volume can be extended to the linear space of formal differences of convex bodies as a homogeneous degree $n$ polynomial. In Section~\ref{sec:virtgeom} we give a geometric interpretation of virtual convex bodies as well as their volumes.

Since the volume is a homogeneous polynomial of degree $n$ on the cone of convex bodies in $\mathbb R^n$, it admits a polarization $\V(\Delta_1,\dots,\Delta_n)$. That is $\V(\Delta_1,\dots,\Delta_n)$ is a unique function of $n$-tuple of convex bodies $\Delta_1,\dots,\Delta_n$ with the following properties:
\begin{itemize}
\item[1.] $\V(\Delta_1,\dots,\Delta_n)$ is linear in each argument, with respect to Minkowski addition;
\item[2.] $\V(\Delta_1,\dots,\Delta_n)$ is symmetric;
\item[3.] on a diagonal it is equal to the volume, i.e. $\V(\Delta,\dots, \Delta)=\V(\Delta)$.
\end{itemize}
The polarization of the volume polynomial is called the mixed volume. By multi-linearity mixed volume can be extended to $n$-tuples of virtual convex bodies.

\section{Volume and mixed volume in algebra}
In  this section we briefly recall the relation of mixed volumes of virtual polytopes with algebraic geometry. Let $\Delta_1,\dots,\Delta_n$ be a collection of convex polyhedra with integral vertices.

The following question was originated by Vladimir Igorevich Arnold in the middle 1970-th:
``Let $P_1,\dots,Pn$ be a generic $n$-tuple of Laurent polynomials with given Newton polyhedra $\Delta(P_i)=\Delta_i$. How many roots  does a system of  equations $P_1=\dots=P_n=0$ have in $(\mathbb C^*)^n$?''

The answer is given by the Bernstein-Koushnirenko-Khovanskii (BKK) theorem which was originaly proved by A.G.~Koushnirenko and D.N.~Bernstein. In later work, I found many generalizations and different proofs  of that result.

\begin{theorem}[BKK theorem]
The number of solutions is equal to $n!\V(\Delta_1,\dots,\Delta_n)$.
\end{theorem}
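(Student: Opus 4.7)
The plan is to reduce the enumerative problem to an intersection computation on an auxiliary compact toric variety, and then identify the resulting intersection number with the mixed volume via the combinatorial description of toric cohomology sketched in the previous section.

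The first step is to choose a smooth complete toric variety $M$ whose fan $\Sigma$ refines the normal fans of $\Delta_1,\dots,\Delta_n$; such a fan always exists after iterated subdivision. Each $\Delta_i$ then determines a globally generated line bundle $L_i$ on $M$, and a Laurent polynomial $P_i$ with Newton polyhedron $\Delta_i$ extends canonically to a global section $s_i\in H^0(M,L_i)$. The torus $(\mathbb C^*)^n$ sits inside $M$ as the open orbit, and counting solutions of $P_1=\dots=P_n=0$ in $(\mathbb C^*)^n$ is the same as counting transverse intersection points of the divisors $\{s_i=0\}$ that lie in that open orbit.

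Next I would run a Bertini-type argument to show that for generic coefficients the divisors intersect transversally and miss the toric boundary $M\setminus(\mathbb C^*)^n$. The boundary decomposes into torus orbits indexed by cones $\sigma\in\Sigma$; the restriction of $s_i$ to the orbit of $\sigma$ is a Laurent polynomial on a smaller torus whose Newton polyhedron is the face of $\Delta_i$ selected by $\sigma$. Since on a torus of dimension $d<n$ a generic collection of $n$ Laurent polynomials has no common zero, genericity orbit by orbit clears the boundary. Consequently the number of solutions equals the global intersection number $c_1(L_1)\cdots c_1(L_n)\in H^{2n}(M,\mathbb Z)$.

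The last step is to identify this intersection number with $n!\,\V(\Delta_1,\dots,\Delta_n)$. Here the simplified virtual polyhedra theory referenced in the introduction enters: by the Khovanskii--Pukhlikov description, $H^*(M,\mathbb Z)$ is the quotient of the algebra of polynomial functions on virtual integral polyhedra by translation invariance and by the kernel of the volume polynomial. Under this isomorphism the class of $L_\Delta$ corresponds to $\Delta$, and the degree map on $H^{2n}$ corresponds to reading off the top-degree coefficient of the volume polynomial; by the defining property of polarization the $n$-fold product $c_1(L_{\Delta_1})\cdots c_1(L_{\Delta_n})$ is sent to $n!\,\V(\Delta_1,\dots,\Delta_n)$. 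The main obstacle is the Bertini step at infinity: one must verify, cone by cone, that the truncated system associated with any proper face of a $\Delta_i$ has no torus zeros for generic coefficients, and that the intersection inside the open orbit is transverse. Once this genericity statement is secured, the rest of the argument is essentially formal intersection theory on $M$.
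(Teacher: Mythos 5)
The paper itself offers no proof of this statement: it is a survey, and the BKK theorem is quoted with a pointer to the literature (the introduction only remarks that the simplified virtual polyhedra theory ``allows to provide a topological proof'' of it, via the Khovanskii--Pukhlikov description of $H^*(M,\mathbb Z)$). Your sketch is essentially the standard toric-compactification proof, and it is the route the paper alludes to: refine the normal fans to a smooth complete fan, realize each $\Delta_i$ as a globally generated line bundle $L_i$ with section $s_i$ extending $P_i$, clear the boundary orbit by orbit by genericity of the face systems, and identify the count with the intersection number $c_1(L_1)\cdots c_1(L_n)$. All of these steps are sound, and you correctly isolate the genericity-at-infinity verification as the technical core. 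The one place where you should be more careful is the final identification $c_1(L_{\Delta_1})\cdots c_1(L_{\Delta_n})=n!\,\V(\Delta_1,\dots,\Delta_n)$: invoking the Khovanskii--Pukhlikov presentation here risks circularity, since that presentation is itself usually \emph{derived} from the degree formula $\deg c_1(L_\Delta)^n=n!\,\V(\Delta)$. A cleaner closing move is to prove the diagonal case directly (e.g.\ by Kushnirenko's lattice-point count $\dim H^0(M,L_\Delta^{\otimes k})\sim k^n\V(\Delta)$ together with asymptotic Riemann--Roch, or by an explicit computation on the toric variety) and then recover the mixed case by multilinearity and the uniqueness of the polarization, exactly as the polarization is set up in Section 2 of the paper. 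With that adjustment the argument is complete and consistent with the framework the paper describes.
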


One generalization of BKK theorem comes if we consider rational functions on $(\mathbb C^*)^n$ instead of Laurent polynomials. Let $\frac{P_1}{Q_1},\dots, \frac{P_n}{Q_n}$ be a generic $n$-tuple of rational functions with given Newton polyhedra $\Delta(P_i)=\Delta_i$ and $\Delta(Q_i)=\Delta_i'$. Then the intersection number in $(\mathbb C^*)^n$ of the principal divisors of these rational functions is equal to multiplied by $n!$ mixed volume of the virtual polyhedra $\tilde \Delta_i=\Delta_i-\Delta_i'$, i.e. is equal to
$n!\V(\tilde \Delta_1,\dots,\tilde \Delta_n)$ (see for details \cite{6}).

\section{Geometric meaning of Virtual strictly convex bodies}\label{sec:virtgeom}
First recall that the support function  $\h_\Delta$ of a compact convex body $\Delta\subset \mathbb R^n$ is a function on the dual space $(\mathbb R^n)^*$ defined by the following formula:
$$
\h_\Delta(\xi)=\max_{x\in \Delta} \langle \xi, x\rangle.
$$
One can further associate a support function to a virtual convex body. Indeed, the support function depend linearly on the convex body, thus it  can be naturally extended to differences of convex bodies:
$\h_{\Delta_1-\Delta_2}=\h_{\Delta_1}-\h_{\Delta_2}$. The support function $\h_\Delta$ of a (vitual) convex body $\Delta$ is a degree one homogeneous function. More precisely, for $\lambda\geq 0$ the following relation holds:
$\h_\Delta(\lambda \xi)=\lambda \h_\Delta(\xi)$.

In what follows, we assume that in $\mathbb R^n$ an Euclidian metric is fixed, which allows to identify $(\mathbb R^n)^*$ with $\mathbb R^n$. Assume further that $\Delta$  has smooth boundary and it is strictly convex. Then for $\xi$ not equal to zero the inner product $\langle \xi,x\rangle$ attaints maxima at one point $a$ of $\partial \Delta$ only and this point $a (\xi)$ is equal to
$\grad \h_{\Delta}(\xi)$.

\begin{lemma}\label{lemma4.1} The vector-function $\grad  \h_{\Delta}(\xi)$ restricted to the unite sphere $S^{n-1}$ defines a map from $S^{n-1}$ to the boundary $\partial \Delta$ of the strictly convex body $\Delta$. Moreover,
this map is inverse to the Gauss map $g:\partial \Delta\to S^{n-1}$.
\end{lemma}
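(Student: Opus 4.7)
The plan is to establish the lemma as essentially a reformulation of what a supporting hyperplane means when $\partial\Delta$ is smooth and the body is strictly convex. The excerpt has already identified the point $a(\xi)\in\partial\Delta$ at which $\langle\xi,\cdot\rangle$ attains its maximum on $\Delta$ with $\grad h_\Delta(\xi)$, so the first step is just to read off that $\grad h_\Delta$ restricted to $S^{n-1}$ takes values in $\partial\Delta$, giving a well-defined map $\varphi\colon S^{n-1}\to\partial\Delta$.

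Next I would verify that $g\circ\varphi=\mathrm{id}_{S^{n-1}}$. Pick $\xi\in S^{n-1}$ and set $p=\varphi(\xi)=a(\xi)$. By the definition of $h_\Delta$, the affine hyperplane $H_\xi=\{x:\langle\xi,x\rangle=h_\Delta(\xi)\}$ is a supporting hyperplane of $\Delta$ at $p$, and $\Delta$ lies in the closed half-space $\langle\xi,\cdot\rangle\le h_\Delta(\xi)$. Since $\partial\Delta$ is smooth, it admits a unique tangent hyperplane at $p$, which necessarily coincides with $H_\xi$. Hence $\xi$ is normal to $\partial\Delta$ at $p$, and it is the \emph{outer} normal because $\Delta$ lies in the half-space $\langle\xi,\cdot\rangle\le\langle\xi,p\rangle$. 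As $|\xi|=1$, this forces $g(p)=\xi$.

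For the reverse composition $\varphi\circ g=\mathrm{id}_{\partial\Delta}$, pick $p\in\partial\Delta$ and set $\xi=g(p)\in S^{n-1}$. The tangent hyperplane at $p$ with outer normal $\xi$ is, by convexity of $\Delta$, a supporting hyperplane, so $\langle\xi,x\rangle\le\langle\xi,p\rangle$ for every $x\in\Delta$. Thus $p$ maximizes $\langle\xi,\cdot\rangle$ on $\Delta$; by strict convexity the maximizer is unique, so $p=a(\xi)=\varphi(\xi)$, as required.

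The only place where any subtlety arises is the first composition, where one must invoke both hypotheses simultaneously: smoothness to guarantee a unique tangent hyperplane at $p$ (so that the supporting hyperplane $H_\xi$ must equal it), and strict convexity to ensure the maximizer $a(\xi)$ is a single point rather than a face. Everything else is routine once the identification $a(\xi)=\grad h_\Delta(\xi)$ supplied by the excerpt is in hand.
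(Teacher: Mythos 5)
Your proof is correct and is exactly the standard argument the paper relies on (the paper itself omits the proof, merely recording the identification $a(\xi)=\grad \h_{\Delta}(\xi)$ in the preceding paragraph). You correctly isolate where each hypothesis is used — smoothness to force the supporting hyperplane at $a(\xi)$ to be the tangent hyperplane, strict convexity to make the maximizer unique — and the two compositions $g\circ\varphi=\mathrm{id}$ and $\varphi\circ g=\mathrm{id}$ give the two-sided inverse as claimed.
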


To a virtual convex body $\Delta$ with a smooth on $\mathbb R^n\setminus \{0\}$ support function $\h_{\Delta}$ One can associate the image $\grad \h_{\Delta (S^{n-1})}$ of the unite sphere under the map $\grad \h_\Delta:S^{n-1}\to \mathbb R^n$ This image has a natural parametrization by the sphere $S^{n-1}$. The correspondence $\Delta \to \grad \h_{\Delta}$ provides a map from the space of virtual convex bodies with smooth support function $\h_{\Delta}$ to the linear space of gradient mappings from $S^{n-1}$ to $\mathbb R^n$.

Consider $(n-1)$-form
$\omega =x_1d x_2\wedge \dots\wedge dx_n$ on $\mathbb R^n$. Notice that the differential $d\omega$ is the standard volume form on $\mathbb R^n$. The following statement is a direct corollary of Lemma~\ref{lemma4.1} and  Stock's formula.

\begin{corollary} \label{cor4.1}The volume of a convex body $\Delta$ with smooth strictly convex boundary $\partial \Delta$ is equal to
$\int_{S^{n-1}}f^*\omega$
where $f= \grad  \h_{\Delta}$ restricted to the sphere $S^{n-1}$.
\end{corollary}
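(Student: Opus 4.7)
The plan is to combine three ingredients: the fact that $d\omega$ is the standard volume form, Stokes' theorem, and the diffeomorphism $\grad h_\Delta\colon S^{n-1}\to \partial\Delta$ furnished by Lemma~\ref{lemma4.1}. Concretely, I would start from
$$
\V(\Delta)=\int_\Delta dx_1\wedge\dots\wedge dx_n =\int_\Delta d\omega,
$$
and apply Stokes' formula to rewrite this as the boundary integral
$$
\V(\Delta)=\int_{\partial\Delta}\omega.
$$
Since $\Delta$ is strictly convex with smooth boundary, $\partial\Delta$ is a smooth closed hypersurface carrying the boundary orientation inherited from $\Delta$, which is exactly the orientation in which the outward Gauss map $g\colon\partial\Delta\to S^{n-1}$ is a well-defined smooth map.

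Next I would invoke Lemma~\ref{lemma4.1}: the map $f=\grad h_\Delta|_{S^{n-1}}$ is the two-sided inverse of the Gauss map $g$. In particular $f$ is a smooth diffeomorphism from $S^{n-1}$ onto $\partial\Delta$, so the change of variables formula for differential forms gives
$$
\int_{\partial\Delta}\omega=\int_{S^{n-1}}f^{*}\omega,
$$
provided $f$ is orientation preserving with respect to the standard orientations on $S^{n-1}$ and $\partial\Delta$. This is the only subtle point, and I would handle it by recalling that for a convex body the Gauss map $g$ has everywhere positive Jacobian (its differential is the shape operator, which is positive definite for a strictly convex smooth body), hence $g$ and therefore $f=g^{-1}$ are orientation preserving. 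Combining the two displayed identities then yields the stated formula $\V(\Delta)=\int_{S^{n-1}}f^{*}\omega$.

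The main obstacle, if any, is the orientation check just described; everything else is a mechanical application of Stokes and the change of variables. A secondary bookkeeping issue is to make sure that the identification of $(\mathbb R^n)^*$ with $\mathbb R^n$ via the fixed Euclidean metric is used consistently, so that the domain $S^{n-1}\subset(\mathbb R^n)^*$ of $\grad h_\Delta$ and the target $\partial\Delta\subset\mathbb R^n$ are both equipped with their standard orientations; once that identification is fixed the argument goes through verbatim.
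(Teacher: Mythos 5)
Your proposal is correct and follows exactly the route the paper intends: the paper states that the corollary is "a direct corollary of Lemma~\ref{lemma4.1} and Stokes' formula," which is precisely your combination of $\V(\Delta)=\int_\Delta d\omega=\int_{\partial\Delta}\omega$ with the change of variables along the diffeomorphism $f=\grad \h_\Delta|_{S^{n-1}}$ inverse to the Gauss map. Your explicit attention to the orientation check is a welcome addition that the paper leaves implicit.
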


Corollary~\ref{cor4.1} provides a proof of Minkowski Theorem for convex bodies with smooth strictly convex boundaries. Indeed,
$$
(\grad  \h_{\lambda \Delta_1 +\mu \Delta_2})^*\omega=(\lambda \cdot \grad  \h_{\Delta_1}+\mu\cdot \grad \h_{\Delta_2})^*\omega
$$
is an $(n-1)$-form whose coefficients are degree $n$ homogeneous polynomials in $(\lambda,\mu)$. Moreover, since the above formula for the volume is written in terms of support functions, it is applicable to virtual convex bodies. More concretely,  for a virtual convex body $\Delta=\Delta_1-\Delta_2$ with $\Delta_1,\Delta_2$ strictly convex bodies with smooth boundaries, let $f$ be $\grad \h_\Delta= \grad (\h_{\Delta_1}-\h_{\Delta_2})$ restricted to the unite sphere. Then one has $\V(\Delta)=\int_{S^{n-1}}f^*\omega$.

Now we will give a different presentation for the volume of virtual convex bodies which is applicable to the case of generalized virtual polyhedra. Let $f: S^{n-1}\to \mathbb R^n$ be a smooth mapping of the unite sphere to $\mathbb R^n$. The image $f(S^{n-1})$ of the unit sphere $S^{n-1}$  cuts the space $\mathbb R^n$ into a collection of connected open  bodies.

\begin{definition} A {\sl winding number} $W_f(U) $ where $U$ is an open connected component of $\mathbb R^n\setminus f(S^{n-1})$ is a mapping degree of the map $\tau_a:S^{n-1}\to S^{n-1}$ where $\tau (\xi)=\frac{f(\xi)-a}{|f(\xi)-a|}$ for $\xi)\in S^{n-1}$ and $a$ is any point in $U$.
\end{definition}

Informally, the number $W_f(U)$ shows how many times the image $f(S^{n-1})$ of the sphere $S^{n-1}$ is rotating around~$U$.

\begin{definition} Let $\h (\xi)$ be a smooth function on $\mathbb R^n\setminus \{0\}$ which is homogeneous of degree one. Then the {\sl virtual convex body with the support function} $\h$ is defined as a chain
$$
\sum_U W_f(U) U,
$$
where $f=\grad \h$ and the sum is taken over all  bounded connected components of the complement $\mathbb R^n\setminus f(S^{n-1})$.
\end{definition}

\begin{theorem}\label{the4.1}  The volume of the virtual convex body with a smooth support function $\h$ on $\mathbb R^n\setminus \{0\}$ is equal to the integral of the volume form against the chain $\sum W_f(U) U$ associated with the virtual convex body. In other words, the volume of virtual bogy is equal to
$$\sum_U W_f(U)\V(U)$$
where  $\V(U)$ is the volume of $U$.
\end{theorem}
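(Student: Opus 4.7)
The plan is to reduce Theorem~\ref{the4.1} to Corollary~\ref{cor4.1} by reinterpreting the boundary integral $\int_{S^{n-1}} f^{*}\omega$ (with $f=\grad \h$ and $\omega = x_1\, dx_2\wedge\cdots\wedge dx_n$) through degree theory. Following the discussion after Corollary~\ref{cor4.1}, the volume of a virtual convex body with smooth support function $\h$ is already given by $\int_{S^{n-1}} f^{*}\omega$, so the task reduces to showing that this integral equals $\sum_U W_f(U)\,\V(U)$.

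First I would choose any smooth extension $F\colon B^n \to \mathbb R^n$ of $f\colon S^{n-1}\to \mathbb R^n$ to the closed unit ball; such an extension exists because $B^n$ is contractible. Since $d\omega = dx_1\wedge\cdots\wedge dx_n$ is the standard volume form, Stokes' theorem gives
\[
\int_{S^{n-1}} f^{*}\omega \;=\; \int_{B^n} F^{*}(dx_1\wedge\cdots\wedge dx_n).
\]
The right-hand side manifestly does not depend on the choice of extension $F$, since any two smooth extensions are homotopic rel $\partial B^n$ and the pulled-back form is closed.

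Next I would evaluate the right-hand integral via the change-of-variables (area) formula. For each regular value $a\in \mathbb R^n \setminus f(S^{n-1})$ of $F$, set $\deg_a(F):=\sum_{p\in F^{-1}(a)}\mathrm{sign}\det(dF_p)$; by standard degree theory this is locally constant on $\mathbb R^n\setminus f(S^{n-1})$, and
\[
\int_{B^n} F^{*}(dx_1\wedge\cdots\wedge dx_n) \;=\; \int_{\mathbb R^n} \deg_a(F)\, da \;=\; \sum_U \deg_a(F)\,\V(U),
\]
the sum running over the connected components $U$ of $\mathbb R^n \setminus f(S^{n-1})$.

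Finally I would invoke the classical boundary formula for the degree: for $a\in U$, $\deg_a(F)$ coincides with the mapping degree of $\tau_a\colon S^{n-1}\to S^{n-1}$, $\tau_a(\xi)=(f(\xi)-a)/|f(\xi)-a|$, which is by definition $W_f(U)$. Moreover, on the unbounded component $\tau_a$ is nullhomotopic, since $\tau_a(\xi)\to -a/|a|$ as $|a|\to \infty$, so that component contributes zero and the sum collapses to $\sum_U W_f(U)\,\V(U)$ over the bounded components, as required. The main technical step, and the only nontrivial point, is the identification $\deg_a(F) = W_f(U)$ via the standard interior-degree-equals-boundary-degree fact from differential topology; once this is stated carefully, the theorem follows immediately.
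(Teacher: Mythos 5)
Your proposal is correct and follows essentially the same route as the paper, which simply says the result ``follows from the formula for the volume of virtual convex body and from Stokes' formula'': you start from $\V=\int_{S^{n-1}}f^*\omega$ and apply Stokes' theorem, with the extension to the ball, the area/degree formula, and the identification of the interior degree $\deg_a(F)$ with the boundary winding number $W_f(U)$ serving as the standard details the paper leaves implicit. The only point worth stating explicitly in your write-up is that Sard's theorem guarantees the regular values are of full measure, so the degree formula $\int_{B^n}F^*(dV)=\sum_U\deg_a(F)\V(U)$ is legitimate; otherwise the argument is complete.
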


The proof follows from the formula for the volume of virtual convex body and from Stock's formula. Theorem~\ref{the4.1} has the following automatic generalization:

\begin{theorem} An integral of degree $m$ polynomial $P$ over virtual convex  body  with a smooth on $\mathbb R^n\setminus \{0\}$  support function $\h$ is equal to the integral of the polynomial $P$ against the chain, associated with this virtual  convex body, i.e. is equal to
$$
\sum_U W_f(U)\int_U Pdx_1\wedge\dots \wedge dx_n.
$$
\end{theorem}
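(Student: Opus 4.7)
The plan is to mimic the proof of Theorem 4.1, replacing the form $\omega$ (whose exterior derivative is the standard volume form) by an $(n-1)$-form $\omega_P$ whose exterior derivative is the top-degree form $P\, dx_1 \wedge \dots \wedge dx_n$. Since $\mathbb{R}^n$ is contractible, the Poincar\'e lemma produces such an $\omega_P$; for instance one can take $\omega_P = \bigl(\int_0^{x_1} P(t,x_2,\dots,x_n)\,dt\bigr)\, dx_2 \wedge \dots \wedge dx_n$. Note that $\omega_P$ is a primitive of a polynomial top-form, so it has polynomial coefficients of degree at most $m+1$ and is defined on all of $\mathbb{R}^n$.

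Next, I would establish the analogue of Corollary~4.1: for an honest compact convex body $\Delta$ with smooth strictly convex boundary,
\[
\int_{\Delta} P\, dx_1 \wedge \dots \wedge dx_n \;=\; \int_{S^{n-1}} f^* \omega_P, \qquad f = \grad \h_\Delta\big|_{S^{n-1}}.
\]
This is immediate from Stokes' formula and Lemma~\ref{lemma4.1}, since $f$ inverts the Gauss map and hence parametrizes $\partial \Delta$ with the correct orientation. Extending by linearity in $\Delta$, the same formula defines the integral of $P$ over a virtual convex body $\Delta = \Delta_1 - \Delta_2$ with smooth support function $\h$, via $f = \grad \h$.

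The core of the proof is then the identity
\[
\int_{S^{n-1}} f^* \omega_P \;=\; \sum_U W_f(U)\, \int_U P\, dx_1 \wedge \dots \wedge dx_n,
\]
valid for any smooth $f : S^{n-1} \to \mathbb{R}^n$ and any $(n-1)$-form $\omega_P$ whose differential is $P\, dx_1 \wedge \dots \wedge dx_n$. The idea is that the singular $n$-chain $C_f = \sum_U W_f(U)\, U$ (sum over bounded components of $\mathbb{R}^n \setminus f(S^{n-1})$) has boundary $f_*[S^{n-1}]$: indeed, the jump of the winding number across a smooth piece of $f(S^{n-1})$ is precisely the local degree with which $f$ covers that piece, so that $\partial C_f$, as a current, coincides with $f_*[S^{n-1}]$. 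Granted this, Stokes' formula yields
\[
\int_{S^{n-1}} f^* \omega_P \;=\; \int_{f_*[S^{n-1}]} \omega_P \;=\; \int_{\partial C_f} \omega_P \;=\; \int_{C_f} d\omega_P \;=\; \sum_U W_f(U)\, \int_U P\, dx_1 \wedge \dots \wedge dx_n.
\]

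The main obstacle is precisely the currents-level identity $\partial C_f = f_*[S^{n-1}]$, which must be justified in a form that allows Stokes' theorem to be applied despite the fact that $f(S^{n-1})$ may have singularities (self-intersections, cusps) and that $C_f$ is only a formal sum of open regions. For generic $\h$ the image is a smooth $(n-1)$-submanifold away from a codimension-two singular set; one handles the singular stratum by an approximation/transversality argument, or alternatively by verifying the equality against test forms using the definition of winding number as the degree of $\tau_a$ on $S^{n-1}$. Once this regularization is in place, applying the identity to $f = \grad \h$ gives the claimed formula.
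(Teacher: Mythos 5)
Your proposal follows exactly the paper's route: the paper's proof likewise replaces $\omega=x_1\,dx_2\wedge\dots\wedge dx_n$ by $Q\,dx_2\wedge\dots\wedge dx_n$ with $\partial Q/\partial x_1=P$ (your $\omega_P$ is precisely this primitive) and then repeats the Stokes-formula argument of Theorem~\ref{the4.1}. The extra care you take with the currents-level identity $\partial C_f=f_*[S^{n-1}]$ is detail the paper leaves implicit, not a different method.
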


\begin{proof} Theorem can be proven in the same way as Theorem~\ref{the4.1}. It is enough to replace  the form $\omega=x_1 d x_2\wedge \dots \wedge d x_n$ with  a form
$Qdx_2\wedge \dots \wedge d x_n$ such that $Q$ is degree $m+1$ polynomial satisfying $\partial Q/ \partial x_1=P$.
\end{proof}

One can generalize above theorems in the following directions:

1) Instead of the unite sphere $S^{n-1}$ and its gradient mappings to $\mathbb R^n$ one can take any piecewise smooth $(n-1)$-cycle $\Gamma$ and consider  the space of piecewise smooth mappings $f:\Gamma\to \mathbb R^n.$ An integrals against $\Gamma$ of the form $f^*\omega$ where $\omega$ is a fixed $(n-1)$ form with polynomial coefficients  on $\mathbb R^n$  is a polynomial on the space of maps $f$ from $\Gamma$ to $\mathbb R^n$. The same polynomial on the space of mapping $f$ one can obtain by integrating $n$-form $d\omega$ against the chain $\sum W_f(U) U$, where $U$ are connected components of $\mathbb R^n\setminus f(\Gamma)$ and $W_f(U)$ is the mapping degree of map $\tau:\Gamma\to S^{n-1}$ where $\tau(x)=\frac{f(x)-a}{|f(x)-a|}\in S^{n-1}$ where $x\in \Gamma$ and $a$ is any point in $U$. The chain $W_f(U)U$ is an analog  of the chain associated with a  virtual convex body.

%\Huge
2) Let $\Gamma$ be $(n-1)$-cycle as above, and let $M(\Gamma,L)$ be the space of piecewise linear mapping of $\Gamma$ to a real linear space $L$. With a fixed $(n-1)$ form $\omega$ with polynomial coefficients  on the space $L$ one can associate the polynomial function on $M(\Gamma,L)$ whose value on $f\in M(\Gamma, L)$ is equal to $\int_\Gamma f^*\omega$. In such generalization one has integrals which depend in polynomial way on parameters (but in such generalization there are no chains analogous to chains associated with virtual convex polyhedra).

\section{Analogues virtual polyhedra and their volume}

Let us return to the original definition of virtual polyhedra.  With any given convex polyhedron $\Delta_0$ one associates a subgroup of virtual  polyhedra analogues to $\Delta_0$. In this section we first recall this construction and then describe a simplified theory of virtual polyhedra.

First recall that each convex  polyhedron $\Delta$ defines a dual fan $\Delta^\bot$ in the following way.  Two covectors are said to be $\Delta$-equivalent if they attaint maxima at the same face of $\Delta$.  A set of all $\Delta$-equivalent covectors form a cone (which is open in intrinsic topology).  Closures of such cones form the dual fan $\Delta^\bot$ for $\Delta$.

\begin{definition}
Two polyhedra $\Delta_1,\Delta_2$ are called analogous if their dual fans coincide. In particular, for each face of $\Delta_1$ there is exactly one face of $\Delta_2$ parallel to it.
\end{definition}

The following lemma is straightforward to show.
\begin{lemma}
Let $\Delta_1,\Delta_2$ be convex polyhedra analogues to $\Delta_0$. Then $\Delta_1+\Delta_2$ is also analogues~$\Delta_0$.
\end{lemma}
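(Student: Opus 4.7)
The plan is to reduce the statement to a standard fact about faces of Minkowski sums, using the dual fan's characterization via maximizing covectors.

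First I would recall the basic face-decomposition of a Minkowski sum: for any covector $\xi\in(\mathbb R^n)^*$, the face of $\Delta_1+\Delta_2$ on which $\langle\xi,\cdot\rangle$ attains its maximum equals the Minkowski sum of the corresponding faces of $\Delta_1$ and $\Delta_2$ separately, i.e.
\[
\mathrm{face}_\xi(\Delta_1+\Delta_2)=\mathrm{face}_\xi(\Delta_1)+\mathrm{face}_\xi(\Delta_2).
\]
This is immediate from $h_{\Delta_1+\Delta_2}(\xi)=h_{\Delta_1}(\xi)+h_{\Delta_2}(\xi)$: a point $x_1+x_2$ maximizes $\langle\xi,\cdot\rangle$ on $\Delta_1+\Delta_2$ if and only if $x_i$ maximizes $\langle\xi,\cdot\rangle$ on $\Delta_i$ for $i=1,2$.

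Next I would use this to compare the equivalence relations defining the dual fans. Two covectors $\xi,\eta$ are $(\Delta_1+\Delta_2)$-equivalent precisely when $\mathrm{face}_\xi(\Delta_1+\Delta_2)=\mathrm{face}_\eta(\Delta_1+\Delta_2)$, and by the decomposition above this happens exactly when $\mathrm{face}_\xi(\Delta_i)=\mathrm{face}_\eta(\Delta_i)$ for both $i=1,2$ (one uses the cancellation property for Minkowski sums to go from equality of the sums of the faces back to equality of each summand). In other words, $(\Delta_1+\Delta_2)$-equivalence is the common refinement of $\Delta_1$-equivalence and $\Delta_2$-equivalence.

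Finally I would invoke the hypothesis: since $\Delta_1$ and $\Delta_2$ are both analogous to $\Delta_0$, the relations of $\Delta_1$-equivalence and $\Delta_2$-equivalence are each identical to $\Delta_0$-equivalence. Their common refinement is then again $\Delta_0$-equivalence, so $(\Delta_1+\Delta_2)^\bot=\Delta_0^\bot$ and $\Delta_1+\Delta_2$ is analogous to $\Delta_0$, as required.

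The only mildly delicate point is the cancellation step in the second paragraph — namely, deducing equality of each of the two face-summands from equality of the sums. I would handle it by noting that $\mathrm{face}_\xi(\Delta_i)$ is determined by $\xi$ intrinsically from $\Delta_i$ (not from the sum), so the characterization "$\xi,\eta$ are $(\Delta_1+\Delta_2)$-equivalent iff they are both $\Delta_1$- and $\Delta_2$-equivalent" can be read directly off $h_{\Delta_1+\Delta_2}$ being linear on a cone iff both $h_{\Delta_1}$ and $h_{\Delta_2}$ are linear on that cone, bypassing any cancellation argument entirely.
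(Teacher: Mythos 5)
Your argument is correct: the face decomposition $\mathrm{face}_\xi(\Delta_1+\Delta_2)=\mathrm{face}_\xi(\Delta_1)+\mathrm{face}_\xi(\Delta_2)$, equivalently the linearity of $\h_{\Delta_1}+\h_{\Delta_2}$ on a cone exactly when each summand is linear there, shows the dual fan of the sum is the common refinement of the two dual fans, which equals $\Delta_0^\bot$ when both fans do; the paper itself gives no proof, declaring the lemma straightforward. One small caution: your appeal to ``cancellation'' to split $F_1^\xi+F_2^\xi=F_1^\eta+F_2^\eta$ into equalities of the summands is not literally an instance of the cancellation law (the two sides have different summands), so you are right to fall back on the support-function/uniqueness-of-face-decomposition argument, which closes that gap cleanly.
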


If a virtual polyhedron $\Delta$ is representable as a difference $\Delta_1-\Delta_2$ of polyhedra analogues to $\Delta_0$ then we will say that the virtual polyhedron $\Delta$ is analogues to $\Delta_0$. In other words, a virtual polyhedron analogues to $\Delta_0$ if the corresponding convex chain is representable in the form $\chi_{\Delta_1}*\chi_{\Delta_2}^{-1}$ where $\chi_{\Delta_i}$ is the characteristic function of $\Delta_i$.  Note that a virtual polyhedra $\Delta_1-\Delta_2$ depends on its support function (and is independent of the representation this function in the form $\h_{\Delta_1}-\h_{\Delta_2}$).

\subsection{Simplified version of analogous convex polyhedra theory}

If one  is only interested with nonsingular measures of a virtual polyhedron, one can neglect polyhedra of dimension $<n$ in the convex chain associated with a virtual polyhedron analogues to $\Delta_0$.  This leads to a simplified theory of virtual polyhedra which can be described  using support functions in a way, similar to the description of virtual convex bodies with smooth boundary presented above.

Convex polyhedra are not strictly convex and the Gauss map from the unite sphere to the boundary of convex polyhedra is not defined.   But one can defined (up to a homotopy) an analog of Gauss map from one polyhedron to an analogues  polyhedron.

\begin{figure}[htbp]
\begin{center}
\includegraphics[scale=.3]{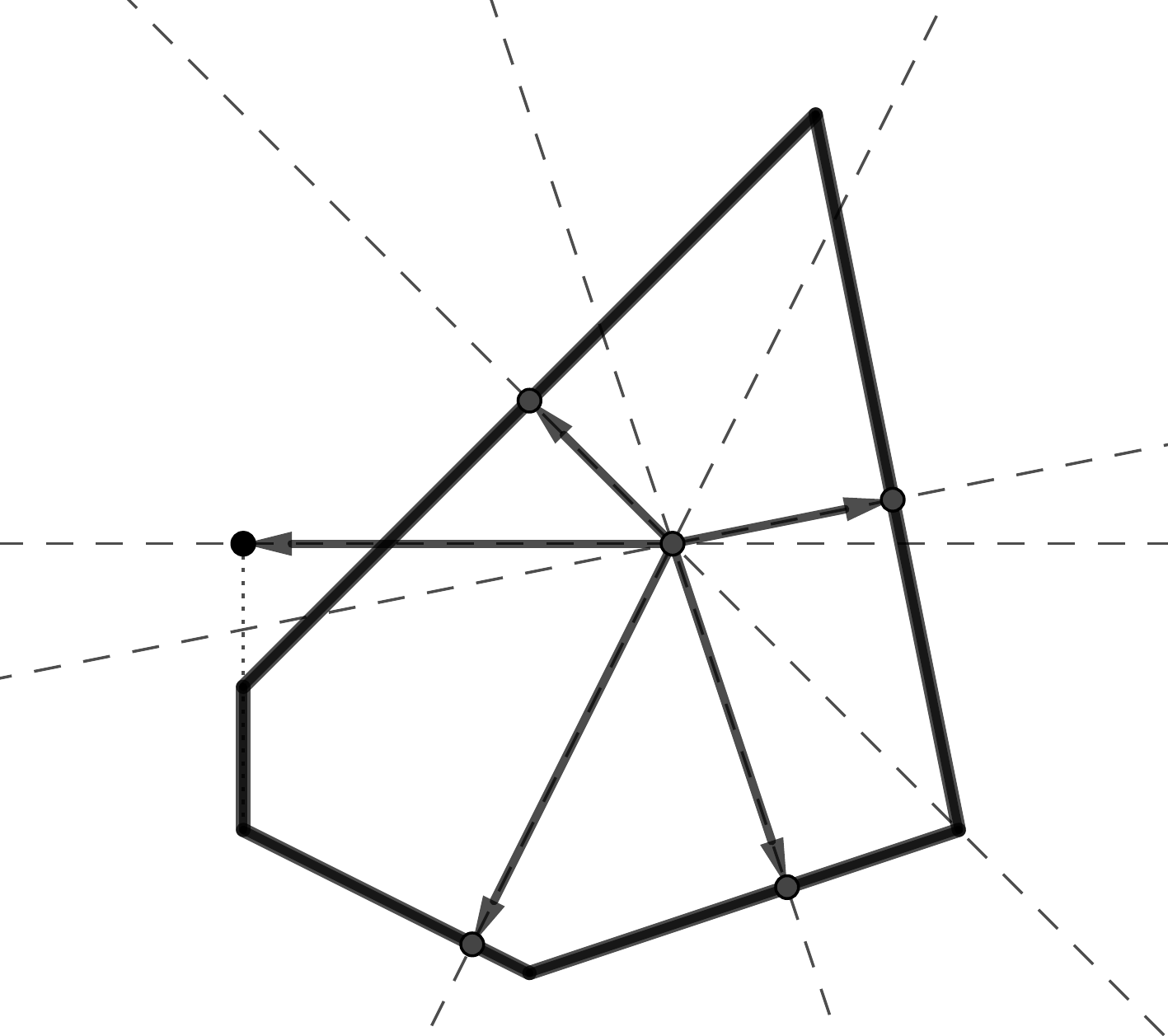}
\end{center}
\caption{Dual fan to a convex $5$-gon  }
\label{fig:fan}
\end{figure}

\begin{figure}[htbp]
\begin{center}
\includegraphics[scale=0.2]{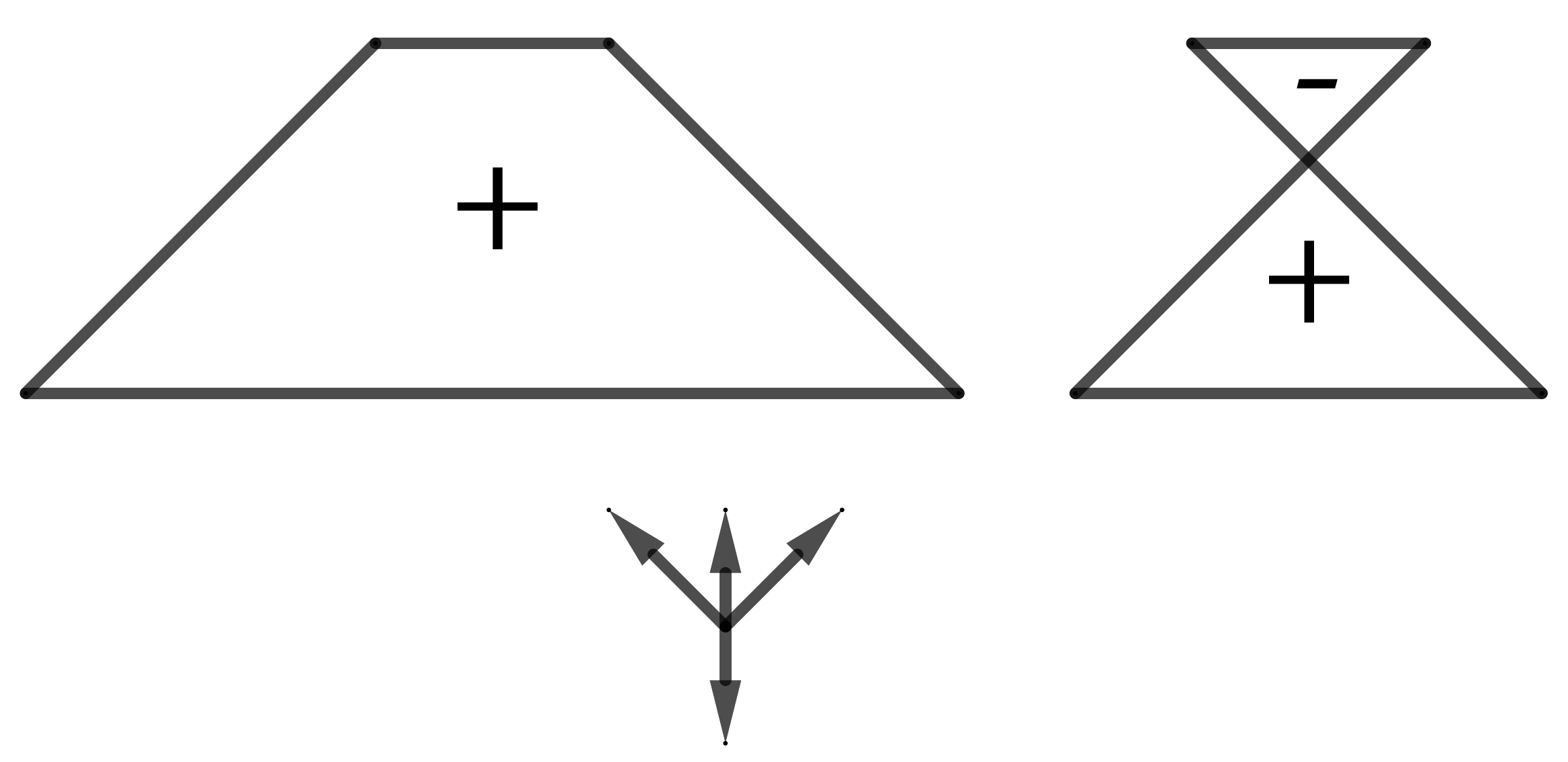}
\end{center}
\caption{Trapezoid, its dual fan and a virtual $4$-gon analogous to it}
\label{fig:trap}
\end{figure}

Let us fix a convex polyhedron $\Delta_0$. In what follows, it will play a role of the unite sphere in our construction.

To each polyhedron $\Delta$ analogous to $\Delta_0$ we associate the union $L_{\Delta}$ of affine hyperplanes $L_{\Gamma_i}$ which are affine spans of the facets $\Gamma_i$ of $\Delta$ (i.e. faces of $\Delta$ having dimension $(n-1)$).

\begin{definition} A continuous map $f_\Delta :\partial \Delta_0\to L_{\Delta}$ is a Gauss type map if the following condition holds: if $x\in \partial \Delta_0$ belongs to the closure of a $(n-1)$-dimensional face $\Gamma_i^0$ of $\Delta_0$ then  $f(x)$ has to belong to $L_{\Gamma_i}$ where $\Gamma_i$ and $\Gamma_i^0$ are parallel faces of $\Delta$ and $\Delta_0$.
\end{definition}

\begin{lemma}\label{lem5.2}
\begin{enumerate}
\item For any $\Delta$ analogous to $\Delta_0$ there exists a piecewise smooth Gauss type map~$f_\Delta$.

\item  Moreover, $f_\Delta$ can be defined in such a way that it depends linearly on $\Delta$, i.e. $f_{\lambda \Delta_1+\mu \Delta_2}=\lambda f_{\Delta_1}+\mu f_{\Delta_2}$.

\item Any two Gauss type maps from $\partial \Delta_0$ to $L_\Delta$ are homotopy equivalent to each other.
\end{enumerate}
\end{lemma}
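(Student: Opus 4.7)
The plan is to derive all three statements from one underlying observation: among polyhedra analogous to $\Delta_0$, corresponding vertices depend linearly on the polyhedron. Since $\Delta$ and $\Delta_0$ share a dual fan $\Sigma$, the cones of $\Sigma$ are in canonical dimension-reversing bijection with the faces of either polyhedron, so there is a canonical correspondence $F^0\leftrightarrow F$ between faces of $\Delta_0$ and faces of $\Delta$ with $F$ parallel to $F^0$. For each vertex $v^0$ of $\Delta_0$, let $v(\Delta)$ be the corresponding vertex of $\Delta$. For any covector $\xi$ in the interior of the full-dimensional cone of $\Sigma$ dual to $v^0$, one has $v(\Delta)=\grad \h_\Delta(\xi)$, and the support function $\h_\Delta$ is itself linear in $\Delta$. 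Hence $v(\lambda\Delta_1+\mu\Delta_2)=\lambda v(\Delta_1)+\mu v(\Delta_2)$. The same argument, applied to the relative interior of lower-dimensional cones of $\Sigma$, shows that any face $F$ of $\Delta$ (viewed inside its affine hull) is a linear function of $\Delta$ among polyhedra analogous to $\Delta_0$.

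For existence and linearity, I would pass to the barycentric subdivision $\mathrm{Sd}(\partial\Delta_0)$ of the boundary complex. A simplex of $\mathrm{Sd}(\partial\Delta_0)$ is a flag of barycenters $b_{F_0^0},\dots,b_{F_k^0}$ associated with a chain $F_0^0\subsetneq\cdots\subsetneq F_k^0$ of proper faces of $\Delta_0$. Define $f_\Delta$ on vertices of $\mathrm{Sd}$ by $b_{F^0}\mapsto b_F$, the barycenter of the corresponding face of $\Delta$, and extend affinely on each simplex. The Gauss-type condition reduces to the following affine observation: if $x\in\overline{\Gamma_i^0}$, then the simplex of $\mathrm{Sd}$ containing $x$ uses only faces $F_j^0\subseteq\overline{\Gamma_i^0}$; the corresponding faces $F_j$ then lie in $\overline{\Gamma_i}\subset L_{\Gamma_i}$, and since $L_{\Gamma_i}$ is an affine hyperplane it is closed under the affine combinations defining $f_\Delta(x)$. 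Linearity of $f_\Delta$ in $\Delta$ then chains three linear operations: vertices of $\Delta$ are linear in $\Delta$ by the first paragraph, barycenters of faces are linear functions of their vertices, and the affine extension on each simplex of $\mathrm{Sd}$ is linear in its vertex values. This yields (1) and (2) simultaneously.

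For (3), given two Gauss-type maps $f_0,f_1\colon \partial\Delta_0\to L_\Delta$, set $f_t(x)=(1-t)f_0(x)+tf_1(x)$. Continuity is automatic. For every $x\in\overline{\Gamma_i^0}$, both $f_0(x)$ and $f_1(x)$ lie in the affine hyperplane $L_{\Gamma_i}$, hence so does $f_t(x)$; thus $\{f_t\}$ is a path of Gauss-type maps connecting $f_0$ and $f_1$. The only genuine obstacle in the whole argument is the linearity $\Delta\mapsto v(\Delta)$ of vertices on the cone of analogous polyhedra; once this is in hand the remaining steps are soft affine combinatorics riding on the barycentric subdivision. A minor subtlety, if one allows $\Delta_0$ to be unbounded, is that the boundary complex contains unbounded faces, but because the construction and the verification are local to each facet they go through unchanged.
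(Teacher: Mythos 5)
Your argument is correct. The paper states this lemma without proof (it defers all proofs to the cited literature), and your route is the standard one: piecewise-linear interpolation from barycenters over the barycentric subdivision of $\partial\Delta_0$ gives existence, linearity of vertices $v(\Delta)=\grad\h_\Delta(\xi)$ in $\Delta$ gives linearity of $f_\Delta$ (provided, as you do, one takes the barycenter to be the vertex average rather than the centroid, which is not Minkowski-linear), and convexity of the affine constraints defining Gauss-type maps gives the straight-line homotopy for part (3). Two small points: in the existence step the containment $F_j^0\subseteq\overline{\Gamma_i^0}$ holds for the \emph{minimal} simplex of the subdivision containing $x$ (the one whose relative interior contains $x$), which is the one you should name; and your closing remark about unbounded $\Delta_0$ undersells the issue, since barycenters of unbounded faces do not exist --- but the paper's setting (volumes of virtual polytopes) is compact, so nothing is lost.
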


Now we are ready to define the volume of a virtual polyhedron and  the integral of a polynomial form over a virtual polyhedron.

First let us associate a collection of affine hyperplanes to a virtual polyhedron $\Delta$ which is analogous to $\Delta_0$. Let $\h$ be a support function of $\Delta$. Then $\h$ is  a piecewise linear function on $\mathbb R^n$ which is linear at each cone of the dual fan $\Delta_0^\bot$ of $\Delta_0$. Then $\h$ defines a collection $L(\h)$ of hyperplanes $L_{\Gamma_i(\h)}$ parallel to the facets $\Gamma_i$ of $\Delta_0$ in the following way.

\begin{definition} Let $e_i$ be a normal vector, orthogonal to $\Gamma_i$ normalized in a certain way (say, has unite length, or is an irreducible integral vector). Then the hyperplane $L_{\Gamma_i(\h)}\subset \mathbb R^n$ is given by the equation
$$
\langle x,e_i\rangle=\h(e_i).
$$
\end{definition}

It is easy to check the following lemma.

\begin{lemma} If $\bigcap \Gamma_{i_j}=F$ is a non-empty face of $\Delta_0$, then $\bigcap L_{\Gamma_{i_j}(\h)}$ is an affine space parallel to~$F$.
\end{lemma}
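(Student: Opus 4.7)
The plan is to deduce the statement by unpacking definitions and exploiting the piecewise linearity of $\h$ on the dual fan $\Delta_0^\bot$. The affine subspace $\bigcap_j L_{\Gamma_{i_j}(\h)}$ is by definition the solution set of the linear system $\langle x, e_{i_j}\rangle = \h(e_{i_j})$. If this set is non-empty, its direction subspace is the kernel of the map $x \mapsto (\langle x, e_{i_j}\rangle)_j$, i.e. the orthogonal complement of the span of $\{e_{i_j}\}$. So the argument splits into two independent steps: identifying the direction subspace with $V_F$ (the linear subspace parallel to $F$), and producing at least one point of the intersection.

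First I would handle the direction. By the standard correspondence between faces of $\Delta_0$ and cones of the dual fan $\Delta_0^\bot$, the face $F = \bigcap_j \Gamma_{i_j}$ corresponds to the cone $C_F \subset \Delta_0^\bot$ generated by the facet normals $e_{i_j}$. A standard and straightforward fact is that the linear span of $\{e_{i_j}\}$ is exactly the orthogonal complement of $V_F$: indeed, each $e_{i_j}$ annihilates $V_{\Gamma_{i_j}} \supset V_F$, so $\mathrm{span}\{e_{i_j}\} \subset V_F^\perp$; and the reverse inclusion follows by a dimension count, since $F$ is the transverse intersection of the $\Gamma_{i_j}$. Hence the direction subspace $\bigl(\mathrm{span}\{e_{i_j}\}\bigr)^\perp$ of the intersection equals $V_F$, giving parallelism as soon as non-emptiness is established.

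For non-emptiness, I would use that $\h$ is linear on each cone of $\Delta_0^\bot$, in particular on $C_F$. Thus there is a unique vector $v \in \mathbb R^n$ such that $\h(\xi) = \langle v, \xi\rangle$ for every $\xi \in C_F$; applying this to $\xi = e_{i_j}$ gives $\langle v, e_{i_j}\rangle = \h(e_{i_j})$ for every $j$, so $v \in \bigcap_j L_{\Gamma_{i_j}(\h)}$. Combined with the preceding paragraph, this proves the lemma.

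The main obstacle, and it is a mild one, is the dimension count identifying $\mathrm{span}\{e_{i_j}\}$ with $V_F^\perp$. In full generality $F$ need not be a transverse intersection of the facets $\Gamma_{i_j}$, but one can restrict to a minimal sub-collection of facets whose intersection is still $F$; the corresponding sub-collection of normals generates $C_F$ and spans the same subspace, so nothing is lost. Everything else is a formal consequence of the definitions of support function, dual fan, and the hyperplanes $L_{\Gamma_i(\h)}$.
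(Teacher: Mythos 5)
The paper offers no proof of this lemma (it is dismissed as ``easy to check''), so there is nothing to compare against line by line; your two-step plan --- identify the direction space of $\bigcap_j L_{\Gamma_{i_j}(\h)}$ with $V_F$, then produce a point of the intersection from the linearity of $\h$ on the normal cone $C_F$ --- is exactly the natural argument, and the non-emptiness half is correct. (One cosmetic slip there: the vector $v$ representing $\h|_{C_F}$ is unique only modulo $(\mathrm{span}\,C_F)^\perp = V_F$, not unique outright, unless $F$ is a vertex; existence is all you need.)

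The one genuine soft spot is your justification of $\mathrm{span}\{e_{i_j}\} = V_F^\perp$. The patch you propose --- pass to a minimal sub-collection of facets whose intersection is still $F$ and claim its normals generate $C_F$ --- fails for non-simple polytopes. Take $\Delta_0$ a square-based pyramid in $\mathbb R^3$ and $F$ its apex: two \emph{opposite} triangular facets form a minimal collection with $\Gamma_1\cap\Gamma_3=F$, yet their two normals span only a $2$-dimensional subspace, while $C_F$ and $V_F^\perp$ are $3$-dimensional; the corresponding two planes $L_{\Gamma_1(\h)}\cap L_{\Gamma_3(\h)}$ meet in a line, not a point. This shows the lemma is false under the literal reading ``any collection of facets with $\bigcap\Gamma_{i_j}=F$,'' so the statement must be read (as the subsequent definition of the Gauss type map indeed requires) with $\{\Gamma_{i_j}\}$ the set of \emph{all} facets containing $F$. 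Under that reading the correct justification is not transversality but the standard fact that the normal cone $C_F$ is generated by the normals of all facets containing $F$ and has dimension $n-\dim F$; hence those normals span $V_F^\perp$, and your computation of the direction space as $\bigl(\mathrm{span}\{e_{i_j}\}\bigr)^\perp = V_F$ goes through. With that substitution the proof is complete.
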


\begin{definition} A Gauss type map $f_{\h}$ for a virtual polyhedron with the support function $\h$ is a map $f_{\h}: \partial \Delta_0 \to L(\h)$ which maps a face $F=\cap \Gamma_{i_j}$ of $\Delta_0$ to the affine space $L_{\h(F)}=
\cap L_{\Gamma_{i_j}(\h)}$.
\end{definition}

The statement of Lemma~\ref{lem5.2} also holds for virtual polyhedra. More precisely one gets the following lemma.

\begin{lemma}
\begin{enumerate}
\item   For any $\h$ which is linear at each cone of  $\Delta_0^\bot$ there exists a Gauss type map~$f_\h$.

\item Moreover, $f_\h$ can be defined in such a way that it will depends linearly on $\h$, i.e. $f_{\lambda \h_1+\mu \h_2}=\lambda f_{\h_1}+\mu f_{\h_2}.$

\item  Any two Gauss type maps from $\partial \Delta_0$ to $L(\h)$ are homotopy equivalent to each other.
    \end{enumerate}
\end{lemma}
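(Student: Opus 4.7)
The plan is to extend the proof strategy of Lemma~\ref{lem5.2} from polyhedra analogous to $\Delta_0$ to arbitrary support functions $\h$ that are piecewise linear on the dual fan $\Delta_0^\bot$, by replacing each facet hyperplane $L_{\Gamma_i}$ of a genuine polyhedron $\Delta$ with the hyperplane $L_{\Gamma_i(\h)} = \{x : \langle x, e_i\rangle = \h(e_i)\}$ determined by $\h$. The construction proceeds via the barycentric subdivision of $\partial\Delta_0$.

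For each face $F$ of $\Delta_0$, let $b_F$ denote its barycenter. The key combinatorial observation is that if $F \subseteq F'$ then the set of facets containing $F$ includes those containing $F'$, whence $L_{\h(F)} \subseteq L_{\h(F')}$. I would define $f_\h(b_F)$ to be the orthogonal projection of the origin onto the (nonempty, by the previous lemma) affine subspace $L_{\h(F)}$. Since $L_{\h(F)}$ is cut out by the system $\langle x, e_i\rangle = \h(e_i)$ for $i \in I(F)$, whose coefficient matrix is independent of $\h$ while the right-hand side is linear in $\h$, this projection is a linear function of $\h$ (and vanishes at $\h = 0$). I would then extend $f_\h$ to all of $\partial\Delta_0$ by affine interpolation on each simplex of the barycentric subdivision, which preserves linearity in $\h$ and thus yields part~(2) automatically from the construction.

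Verification of the Gauss type condition rests on the standard fact that the open simplex $\mathrm{conv}(b_{F_0}, \ldots, b_{F_m})$ (with $F_0 \subsetneq \cdots \subsetneq F_m$) lies in the relative interior of $F_m$. At such a point $x$ the smallest face of $\Delta_0$ containing $x$ is $F_m$, so I need $f_\h(x) \in L_{\h(F_m)}$. This follows from the monotonicity noted above: each vertex value $f_\h(b_{F_j})$ lies in $L_{\h(F_j)} \subseteq L_{\h(F_m)}$, and the affine subspace $L_{\h(F_m)}$ is preserved under affine combinations.

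For part~(3), if $f, g$ are two Gauss type maps for $\h$, the straight-line homotopy $H(x, t) = (1-t)f(x) + t g(x)$ stays within $L_{\h(F(x))}$ at each $x$, because affine subspaces are convex; hence $H$ is a homotopy through Gauss type maps. The main obstacle is really just the careful setup in part~(1)---choosing a triangulation of $\partial\Delta_0$ compatible with the face structure (which the barycentric subdivision supplies) and designing the vertex values as a linear-in-$\h$ section of the face-indexed family of affine subspaces. Once that is done, the rest is essentially the same combinatorial and convex-geometric bookkeeping that goes into the original Lemma~\ref{lem5.2}.
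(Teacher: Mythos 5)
Your proposal is correct, and it is essentially the intended argument: the paper itself gives no proof of this lemma (all proofs are deferred to the cited literature), but your route --- barycentric subdivision of $\partial \Delta_0$, a choice of point in each nonempty affine subspace $L_{\h(F)}$ depending linearly on $\h$ (the minimum-norm solution of the defining linear system works), affine interpolation using the monotonicity $F\subseteq F' \Rightarrow L_{\h(F)}\subseteq L_{\h(F')}$, and straight-line homotopies justified by convexity of affine subspaces --- is exactly the standard construction underlying Lemma~\ref{lem5.2} and its extension to virtual polyhedra. I see no gaps.
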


\begin{definition} A winding number $W_{f_\h}(U) $ where $U$ is an open connected component of $\mathbb R^n\setminus L(\h)$, is the mapping degree of a map $\tau:\partial \Delta_0\to S^{n-1}$ where $\tau (x)=\frac {f_\h(\xi)-a}{||f_\h(\xi)-a||}$ for $x\in \partial \Delta_0$ and $a$ is any point in $U$.
\end{definition}

Analogous to the case of virtual convex bodies with smooth boundary, to the virtual polyhedron with support function $\h$ we associate the chain
$$
\sum_U W_{f_{\h}}(U)
$$
where the sum is taken over open connected components of $\mathbb R^n\setminus L(\h)$. One can prove the following theorem.

\begin{theorem}\label{thm5.1}The chain $\sum W_{f_{\h}}(U) U$ can be obtained  from the virtual polyhedra with the support function $\h$ by neglecting all polyhedra in the chain whose dimension is smaller than $n$.
\end{theorem}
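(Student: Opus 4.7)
The plan is to adapt the Stokes-theorem argument used for Theorem~\ref{the4.1} in the smooth case to the piecewise-linear setting: I will show that both chains in the claim produce the same integrals against all polynomial $n$-forms $d\omega$, and then finish by a moment-uniqueness argument. The proof splits into a base case on an honest convex polyhedron, a polynomial identity of integrals, and a uniqueness conclusion.

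First I would verify the statement when $\h=\h_\Delta$ is the support function of an actual convex polyhedron $\Delta$ analogous to $\Delta_0$. Here the original convex chain is $\chi_\Delta$ and its top-dimensional part is $\chi_{\mathrm{int}(\Delta)}$. On the Gauss-type side, the hyperplanes $L_{\Gamma_i(\h)}$ are the supporting hyperplanes of the facets of $\Delta$, so $\mathrm{int}(\Delta)$ is a connected component of $\mathbb{R}^n\setminus L(\h)$ and $f_\h(\partial\Delta_0)\subseteq\partial\Delta$. A natural choice of $f_\h$ is the face-by-face parallel-transport map from $\partial\Delta_0$ to $\partial\Delta$, which is homotopic to the inverse combinatorial Gauss map of $\Delta$; any other Gauss-type map is homotopic to this one by part~(3) of the preceding Lemma. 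For $a\in\mathrm{int}(\Delta)$ the normalization $\tau_a$ is then homotopic to the identity of $S^{n-1}$ and has degree~$1$, while for $a$ outside $\Delta$ the image of $f_\h$ lies in a closed half-space not containing $a$, so $\tau_a$ misses an open hemisphere and has degree~$0$. Thus $\sum_U W_{f_\h}(U)\,U = \chi_{\mathrm{int}(\Delta)}$, matching the top-dimensional part of $\chi_\Delta$.

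Next I would extend to arbitrary $\h$ through a polynomial identity of integrals. Fix a polynomial $(n-1)$-form $\omega$ on $\mathbb{R}^n$. A componentwise application of Stokes' formula to the Gauss-type chain yields
$$
\int_{\sum_U W_{f_\h}(U)\,U} d\omega \;=\; \int_{\partial\Delta_0} f_\h^*\omega,
$$
since each oriented boundary $\partial U$ is swept $W_{f_\h}(U)$ times by $f_\h$ with matching signs. The right-hand side is polynomial in $\h$, because $f_\h$ depends linearly on $\h$ by part~(2) of the preceding Lemma. On the virtual-polyhedra side, $\int_{C_n(\h)}d\omega$---where $C_n(\h)$ denotes the top-dimensional part of the original virtual-polyhedron chain with support function $\h$---is also polynomial in $\h$, by the polynomiality theorem for nonsingular polynomial measures on virtual polyhedra recalled in Section~1. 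The two polynomials agree on the open cone of support functions of honest convex polyhedra by Step~1, hence they agree on the entire ambient linear space of functions $\h$ linear on each cone of $\Delta_0^\bot$.

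Finally, both $C_n(\h)$ and $\sum_U W_{f_\h}(U)\,U$ are compactly supported finite real linear combinations of indicator functions of bounded closed polyhedra, and any such combination is determined by its integrals against all polynomial $n$-forms (equivalently by its moments, by a standard Stone--Weierstrass argument on a large ball). Since the two agree on every polynomial form, they are equal as chains. The main obstacle I anticipate is in Step~2: the chain $\sum_U W_{f_\h}(U)\,U$ is \emph{not} itself linear in $\h$---as $\h$ varies, the regions $U$ and the winding numbers jump discretely across walls where a new facet appears---so linearity must not be invoked at the chain level. Instead linearity has to be produced only after pairing with a polynomial form, where Stokes converts the $\h$-linearity of $f_\h$ into a polynomial identity of integrals; that polynomial identity is precisely what the moment-uniqueness step then consumes.
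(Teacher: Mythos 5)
The paper itself gives no proof of Theorem~\ref{thm5.1} (consistent with its abstract, it defers all details to the cited literature), so there is no internal proof to compare yours against; judged on its own, your argument is correct and is the natural reconstruction along the lines the paper sketches for smooth virtual convex bodies in Section~4. The base case is right: for convex $\Delta$ analogous to $\Delta_0$ the supporting hyperplanes miss $\mathrm{int}(\Delta)$, so $\mathrm{int}(\Delta)$ is a single component of $\mathbb R^n\setminus L(\h)$ with winding number $1$ and all other components have winding number $0$; and your closing observation that linearity in $\h$ lives only at the level of $f_\h$, not of the chain, is exactly the right point. Two caveats on where the weight actually sits. First, Step~2 rests entirely on the identity $\int_{\sum_U W_{f_\h}(U)U}d\omega=\int_{\partial\Delta_0}f_\h^*\omega$, which you justify only with the heuristic that each $\partial U$ is ``swept $W(U)$ times''; the clean route is to extend $f_\h$ to an $n$-chain $F$ with boundary cycle $f_\h(\partial\Delta_0)$ (e.g.\ a cone over the map), apply Stokes to get $\int_{\partial\Delta_0}f_\h^*\omega=\int F^*(d\omega)$, and then invoke the degree formula $\int F^*(d\omega)=\sum_U W_{f_\h}(U)\int_U d\omega$. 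This is precisely the content of Theorem~\ref{thm5.2} and of generalization (1) at the end of Section~4, so citing those is legitimate, but the step should be flagged as the crux rather than dispatched in a clause. Second, the polynomiality of $\int_{C_n(\h)}d\omega$ on the convex-chain side is imported from the Euler-characteristic theory of \cite{1}; for a theorem whose whole purpose is to bridge the two theories this is probably unavoidable, but it means your proof is not elementary on that side, and you should also note that the extension from the cone of convex $\h$ to all piecewise linear $\h$ uses that every $\h$ linear on the cones of $\Delta_0^\bot$ becomes strictly convex after adding a large multiple of $\h_{\Delta_0}$, so the cone of agreement is full-dimensional. With those two points made explicit, the moment-uniqueness conclusion finishes the proof as you describe.
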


Thus an integral over the virtual polyhedron of any $n$-form with polynomial coefficients can be obtain by integrating this form over the chain $\sum W_{f_{\h}}(U) U$. One can deal with integrals of such type using simple arguments which we applied above to similar integrals over virtual convex bodies with smooth boundaries (and the integration over Euler characteristic technique  is not needed here).

\begin{theorem}\label{thm5.2} Values of
$$\int_{\partial \Delta_0}f_\Delta^* (\omega) \quad \text{and} \quad \int_{\partial \Delta_0} f_\Delta^* (Qdx_2\wedge \dots\wedge d x_n)$$
are equal to
$$\sum W_{f_{\h}}(U)\int_U d x_1\wedge \dots\wedge d x_n\; \text{and} \;\sum W_{f_{\h}}(U)\int_U d x_1\wedge \dots \wedge d x_n$$
correspondingly.
\end{theorem}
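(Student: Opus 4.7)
The strategy is the same one used in Corollary~\ref{cor4.1} and Theorem~\ref{the4.1} for smooth strictly convex bodies: Stokes' formula applied to a filling of an image cycle, where the filling is built out of the regions $U$ weighted by their winding numbers. The only difference is that one works in the piecewise-linear category instead of the smooth one, so every appeal to smoothness has to be replaced by an appeal to piecewise smoothness plus the fact that both sides are invariant under piecewise-smooth homotopy of $f_\Delta$ rel.\ the combinatorial constraints (which is exactly the content of the third clause of Lemma~\ref{lem5.2}).

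The key step is the boundary identity
$$
\partial\!\left(\sum_U W_{f_\h}(U)\, U\right)=(f_\Delta)_*[\partial\Delta_0]
$$
in the sense of $(n-1)$-currents in $\mathbb R^n$. This is really the defining property of the winding number: for any regular point $a\in U$ and any generic ray from $a$ to infinity, the signed number of crossings of the ray with the image cycle $(f_\Delta)_*[\partial\Delta_0]$ equals the mapping degree $\deg\tau_a = W_{f_\h}(U)$, while for $a$ in an unbounded component the degree is $0$, so only bounded $U$'s contribute and the sum is finite. Combined with the standard fact that two regions $U$, $U'$ separated by a facet-piece of $f_\Delta(\partial\Delta_0)$ have winding numbers differing exactly by the coorientation multiplicity of that facet-piece, this yields the desired current identity. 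Given this, Stokes' formula immediately gives
$$
\int_{\partial\Delta_0} f_\Delta^*\omega=\int_{(f_\Delta)_*[\partial\Delta_0]}\omega
=\int_{\partial C}\omega=\int_C d\omega=\sum_U W_{f_\h}(U)\int_U d\omega,
$$
where $C=\sum_U W_{f_\h}(U)\,U$. Specializing to $\omega=x_1\,dx_2\wedge\dots\wedge dx_n$ gives $d\omega=dx_1\wedge\dots\wedge dx_n$, which is the first equality of the theorem (the volume of the virtual polyhedron). Specializing to $\omega=Q\,dx_2\wedge\dots\wedge dx_n$ with $\partial Q/\partial x_1=P$ gives $d\omega=P\,dx_1\wedge\dots\wedge dx_n$, which is the second equality (the right-hand side of which contains an obvious typo in the statement: the integrand should be $P\,dx_1\wedge\dots\wedge dx_n$, not $dx_1\wedge\dots\wedge dx_n$).

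The main obstacle is making the boundary identity above rigorous despite the fact that the image $f_\Delta(\partial\Delta_0)\subset L(\h)$ is a singular union of pieces of affine hyperplanes: one has to show that the ``coorientation multiplicity'' counted on each piece of $L(\h)$ is consistent across lower-dimensional strata of $L(\h)$. The natural way to handle this is to note that the integrand in $\int_{\partial\Delta_0} f_\Delta^*\omega$ depends only on the piecewise-linear homotopy class of $f_\Delta$, as does the chain $C$ up to agreement on measure-zero sets; hence it is enough to verify the formula on a generic representative $f_\Delta$ for which the lower-dimensional strata of the image have measure zero in all relevant integrals, and Stokes' theorem on each top-dimensional stratum of $\partial\Delta_0$ then assembles to the global identity. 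Alternatively, one may invoke Theorem~\ref{thm5.1}: the chain $\sum_U W_{f_\h}(U)\,U$ coincides with the top-dimensional part of the convex chain of the virtual polyhedron, and Stokes' formula for convex chains (which can be proved by integration over Euler characteristic, as in~\cite{1}) then reduces Theorem~\ref{thm5.2} to the corresponding, already established statement for the actual virtual polyhedron.
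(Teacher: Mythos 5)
Your proposal is correct and follows essentially the route the paper itself indicates: the paper gives no written proof of Theorem~\ref{thm5.2}, but the remark after Theorem~\ref{thm5.1} says the result is obtained by the same Stokes'-formula argument used for Theorem~\ref{the4.1}, which is exactly the boundary identity $\partial\bigl(\sum_U W_{f_\h}(U)\,U\bigr)=(f_\Delta)_*[\partial\Delta_0]$ plus Stokes that you spell out. Your observation that the second displayed right-hand side in the statement should read $\sum W_{f_\h}(U)\int_U P\,dx_1\wedge\dots\wedge dx_n$ with $P=\partial Q/\partial x_1$ is also correct.
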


For a convex support function $\h$ linear on each cone of $\Delta^\bot$ one associates an oriented polyhedron $\Delta(\h)$ with the support function $\h$. If one is interested in integrals against a chain of polynomial differential forms then  the natural continuation of the functor $\h\to \Delta(\h)$ to non-convex support functions linear on each cone of $\Delta^\bot$ is the functor $\h \to \sum W_{f_{\h}}(U) U$. Below we discuss a wide generalization of the above construction.

The following formulation allows even wider generalizations. Let $\h\to f_{\h}(\partial \Delta_0)\in H_{n-1}(L(\h))$ be a functor which associates to $\h$ the homotopy class in $H_{n-1}(L(\h))$ which is the image of the fundamental class of $\partial \Delta_0$ under the map $f_\h$. That functor has a generalization to the case when instead of the union of hyperplanes one consider the union of affine spaces.

\section{Generalized virtual polyhedra theory}

We will generalized the above construction in the following directions:
\begin{enumerate}
\item  Instead of the union $L(\h)$ of hyperplanes parallel to the faces of a convex polyhedron $\Delta$ we will consider   union $X$ of arbitrary affine subspaces of any dimensions in an affine space;

\item  instead of the image of $\partial \Delta_0$ in $L(\h)$ we will consider arbitrary cycles in $X$.
We will identify homology groups of  unions $X_1$ and $X_2$
of different collections of affine subspaces under some combinatorial  assumptions.

\item  For the case when affine subspaces  are hyperplanes in $\mathbb R^n$  and a cycle having dimension $(n-1)$ the above generalization can be modified. In that modification instead of $(n-1)$-dimensional cycles in the union of hyperplanes  in $\mathbb R^n$ one can deal with $n$-dimensional chains in $\mathbb R^n$ whose boundaries are the above cycles. In a particular case of simplified analogous virtual polyhedra, such chains coincide with chains $\sum W_{f_{\h}}(U)U$ which we discussed above.
\end{enumerate}

In this section we deal with ordered sets of affine subspaces $L_i$ indexed by the same set $I$.

\begin{definition} The set $X=\bigcup_{i\in I} L_i$ has the natural covering by spaces $L_i$. The nerve $K_X$ of the natural covering of $X$ is the following simplicial complex:
\begin{enumerate}
\item  the set of vertices of $K_X$ is the set $I$ of indexes $i$;

\item the set $J\subset I$ of vertices belongs to one simplex if and only if
$
\bigcap_{i\in J}L_i\neq\emptyset$.
\end{enumerate}
\end{definition}

\begin{definition} Let $X_1=\bigcup L_i$ and $X_2=\bigcup M_i$ be unions of affine subspaces in the spaces $L$ and $M$
 indexed by the same set $\{i\}=I$. We will say that:
\begin{enumerate}
\item $X_1$ dominate $X_2$ if the nerve $K_{X_1}$ is a subcomplex of the nerve $K_{X_2}$;

\item $X_1$ and $X_2$ are equivalent if $K_{X_1}=K_{X_2}$.
\end{enumerate}
\end{definition}

\begin{figure}[htbp]
\begin{center}
\includegraphics[scale=0.2]{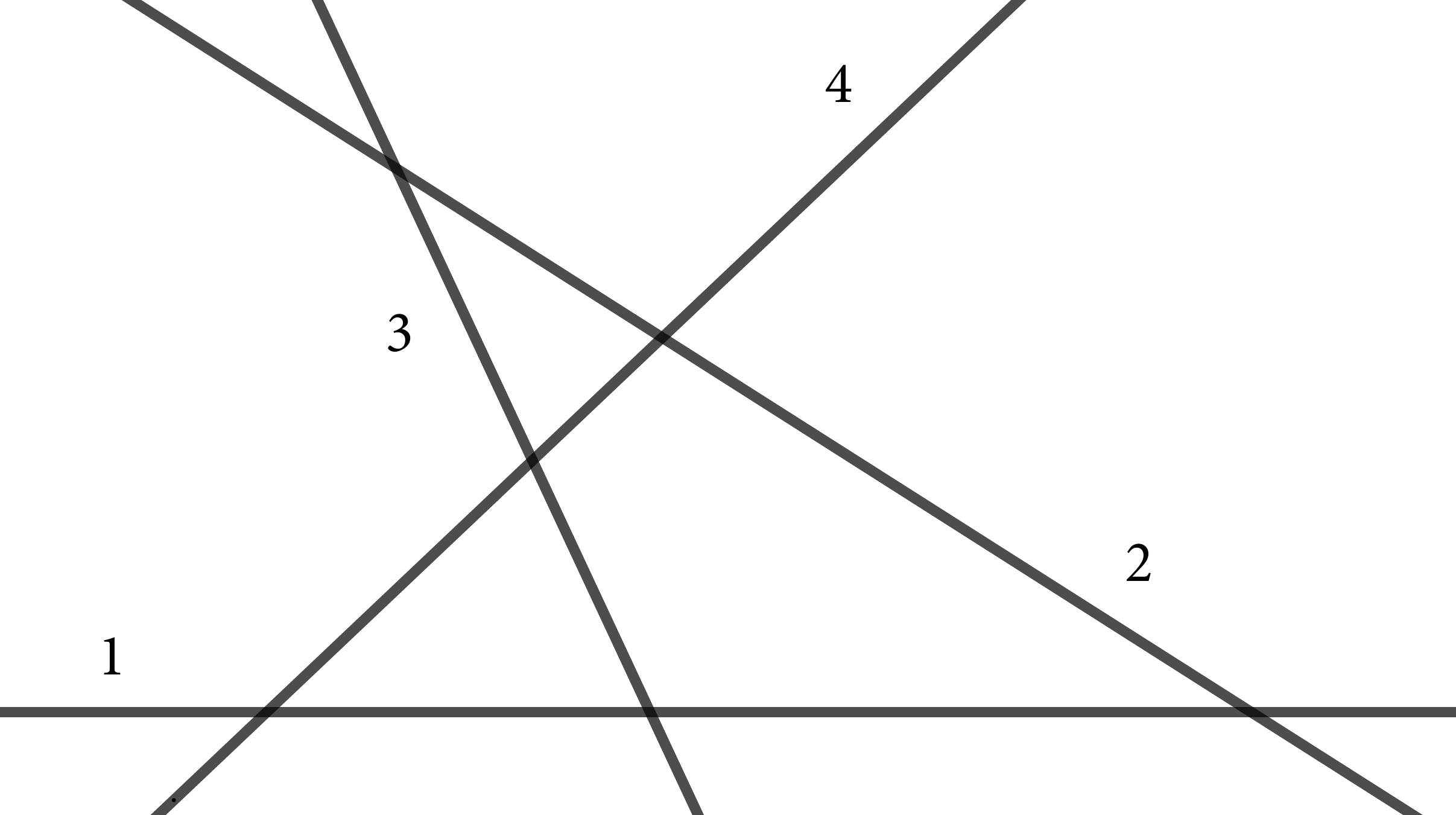}
\end{center}
\caption{An ordered set of four lines on a plane }
\label{fig:fig 3}
\end{figure}

Let $BK_X$ be the barycetric subdivision of the nerve $K_X$. For each $i\in I$ let $B_iK_X$ be the union of all (closed) simplices in $BK_X$ which contain the vertex $A_i$ (corresponding to the space $L_i$ in the covering of $X$).

\begin{lemma} The nerve of covering of $BK_X$ by the closed sets $B_iK_X$ coincides with the original nerve $K_X$.
\end{lemma}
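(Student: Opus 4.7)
The plan is to unpack the definitions directly: recognise $B_iK_X$ as the closed star of the vertex $A_i$ in the barycentric subdivision $BK_X$, and then characterise which intersections of these closed stars are nonempty. The only fact I would invoke is the standard description of simplices of $BK_X$: they correspond to strictly increasing chains $\sigma_0\subsetneq\sigma_1\subsetneq\dots\subsetneq\sigma_k$ of simplices of $K_X$, with vertices the barycentres $b(\sigma_0),\dots,b(\sigma_k)$, and the original vertex $A_i$ identified with $b(\{i\})$.

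First I would fix a point $p\in BK_X$ and let $\sigma_0\subsetneq\dots\subsetneq\sigma_k$ be the chain whose open simplex contains $p$. By the nature of the closed star, $p\in B_iK_X$ if and only if some closed simplex of $BK_X$ contains both $p$ and $A_i$, i.e.\ some chain refining $\sigma_0\subsetneq\dots\subsetneq\sigma_k$ has $\{i\}$ as one of its terms. Since $\{i\}$ is a vertex of $K_X$, such an insertion is possible if and only if $i\in\sigma_0$ (either $\sigma_0=\{i\}$ already, or $\{i\}$ can be prepended to the chain).

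Next, combining this criterion over $i\in J$, I would conclude that $p\in\bigcap_{i\in J}B_iK_X$ if and only if $J\subseteq\sigma_0$. Hence the intersection is nonempty iff there exists a simplex of $K_X$ containing $J$, and because $K_X$ is closed under taking faces, this is equivalent to $J$ itself being a simplex of $K_X$. For the reverse direction (to exhibit a point explicitly when $J\in K_X$), I would take the barycentre $b(J)$: the chain $\{i\}\subsetneq J$ shows $b(J)\in B_iK_X$ for every $i\in J$.

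The proof is purely combinatorial; no topology or homotopy is required. The main (modest) obstacle is notational — keeping clearly separated the two roles that vertices play, as elements of $I$ indexing the subspaces $L_i$ on the one hand, and as barycentres $b(\{i\})$ inside the subdivided complex $BK_X$ on the other — and being careful that the "minimal simplex containing $J$" step uses the face-closure axiom of a simplicial complex.
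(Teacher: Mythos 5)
Your argument is correct: identifying $B_iK_X$ as the closed star of $A_i$ in $BK_X$, the criterion $p\in B_iK_X\iff i\in\sigma_0$ (for $\sigma_0$ the minimal term of the chain indexing the open simplex containing $p$) gives exactly $\bigcap_{i\in J}B_iK_X\neq\emptyset\iff J\in K_X$, with the barycentre $b(J)$ witnessing the nonemptiness. The paper states this lemma without proof, deferring to the cited literature, and your closed-star computation is precisely the standard argument intended there, so there is nothing to correct or to contrast.
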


\begin{definition} A map $g:K_{X_1}\to X_2$ is compatible with coverings if for any $i\in I$ and $x\in BX_i$ the image  $g(x)$ belongs to $M_i$.
\end{definition}

\begin{theorem}\label{thm6.1}
\begin{enumerate}
\item A map $g:K_{X_1}\to X_2$ compatible with coverings exists if and only if the inclusion $K_{X_1}\subset K_{X_2}$ holds.

\item All maps from $K_{X_1}$ to $X_2$ compatible with coverings are homotopy equivalent to each other.

\item If $K_{X_1}=K_{X_2}$ then a map $g:K_{X_1}\to X_2$ provides a homotopy equivalence between these spaces.
    \end{enumerate}
\end{theorem}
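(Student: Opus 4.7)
The plan is to treat the three parts of Theorem~\ref{thm6.1} in order, handling parts 1 and 2 by direct constructions and reducing part 3 to the classical Nerve Lemma for good covers by convex sets.

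For part 1, the forward direction follows by tracking barycenters: if a compatible $g$ exists and $\sigma = \{i_0,\dots,i_k\}$ is a simplex of $K_{X_1}$, then its barycenter $b_\sigma$ lies in $B_{i_j}K_{X_1}$ for every $j$, so compatibility forces $g(b_\sigma) \in \bigcap_j M_{i_j}$; this intersection is therefore nonempty, giving $\sigma \in K_{X_2}$. For the converse, assuming $K_{X_1}\subset K_{X_2}$, I would define $g$ first on the vertices $\{b_\sigma\}$ of the barycentric subdivision $BK_{X_1}$ by picking $g(b_\sigma) \in \bigcap_{i\in\sigma} M_i$ (nonempty since $\sigma\in K_{X_2}$), then extend affinely over each simplex $[b_{\sigma_0},\dots,b_{\sigma_k}]$ of $BK_{X_1}$ with $\sigma_0\subsetneq\cdots\subsetneq\sigma_k$. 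Such a simplex is contained in $B_iK_{X_1}$ precisely when $i\in\sigma_0$ (equivalently, when $i$ lies in every $\sigma_j$); in that case all vertex values lie in the affine subspace $M_i$ and so does any convex combination of them, which yields compatibility.

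For part 2, a straight-line homotopy suffices: given two compatible maps $g_0,g_1$, set $H(x,t) = (1-t)g_0(x)+tg_1(x)$ inside the ambient affine space containing $X_2$. If $x\in B_iK_{X_1}$, both endpoints lie in the affine (hence convex) set $M_i$, so the whole segment does; therefore $H$ takes values in $X_2 = \bigcup_i M_i$ and every time-slice is a compatible map.

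Part 3 is where the real work lies. Here I would invoke the Nerve Lemma: because each $M_i$ and each nonempty finite intersection $\bigcap_{i\in\sigma} M_i$ is convex and hence contractible, the covering $\{M_i\}$ of $X_2$ is a good cover and the Nerve Lemma produces a homotopy equivalence $\tilde g\colon K_{X_2}\to X_2$. Moreover, the standard proof of the Nerve Lemma in the piecewise-linear setting builds $\tilde g$ by exactly the vertex-then-affine-extension recipe used in part 1, so $\tilde g$ is compatible with the coverings in the sense of our definition. Under the hypothesis $K_{X_1}=K_{X_2}$, part 2 then shows that $g$ is homotopic to $\tilde g$, and since homotopy equivalence is preserved under homotopy, $g$ itself is a homotopy equivalence. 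The main obstacle is verifying that the map produced by the Nerve Lemma really is compatible in our sense; if one prefers to avoid citing the Nerve Lemma as a black box, an alternative is an induction on $|I|$ that adjoins one subspace $M_i$ at a time, using contractibility of the intersections and a Mayer--Vietoris-style gluing to propagate the homotopy equivalence, at the cost of more combinatorial bookkeeping.
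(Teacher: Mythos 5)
Your argument is correct, and it is essentially the intended one: the paper itself gives no proof of Theorem~\ref{thm6.1} (all proofs are deferred to the cited literature, here \cite{7}), and the standard argument is exactly what you write --- barycentric extension over $BK_{X_1}$ for existence, the barycenter-tracking observation for necessity, straight-line homotopies through the affine pieces $M_i$ for part 2, and the nerve theorem for a cover with contractible nonempty intersections for part 3. The one point worth spelling out is that the Nerve Lemma is usually stated for open covers, so you should note why it applies to this finite \emph{closed} cover of a polyhedron by affine subspaces (e.g., triangulate $X_2$ compatibly with all the $M_i$ and their intersections and use the version for covers by subcomplexes); once that is in place, your compatibility check for the nerve-lemma map together with part 2 completes the argument.
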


Theorem~\ref{thm6.1} implies that all cycles of $H_*(X)$ could be seen in homology groups of the nerve $K_X$ of the covering of $X$. Moreover if $K_{X_1}$ is a subcomplex of $K_{X_2}$ then each cycle in $H_*(K_{X_1})$ has natural image in $H_*(X_2)$.

Consider a collection of affine  $k$-dimensional subspaces $\{L_i\}$ in a vector space $L$ with $i\in I$. For each $i$ denote by $Y_i$ the factor space $L/\tilde L_i$ where $\tilde L_i$ is the vector subspace parallel to $L_i$.

\begin{definition} A collection of vectors $y_i\in Y_i$ are compatible with the nerve of $X=\bigcup L_i$ if the following condition holds:
if $L_{i_1}\cap \dots \cap L_{i_m}\neq \emptyset $ then $(L_{i_1}+y_{i_1})\cap \dots\cap (L_{i_m}+y_{i_m})\neq \emptyset$.
\end{definition}

Let $Y$ be the space of all $I$-tuples $y_1,\dots ,y_{|I|}$ compatible with the nerve of $X=\bigcup L_i$.

\begin{definition} To each point ${\bf y}\in Y$ one associates a collection $\{L_i(y)\}$ where $L_i(y)= L_i+y_i$.
\end{definition}

The set $Y$ parametrizes translates of the subspaces $L_i$ which preserve existed intersections. More precisely, for a generic point ${\bf y}\in Y$ the collections $\{L_i({\bf y})\}$ has the same nerve, which we denote by $K_X$. There is a subset $\Sigma$ in $Y$ of a smaller dimension than $Y$, such that the nerve of $\bigcap L_i({\bf y})$ contains $K_X$ as a proper subcomplex.

For any point ${\bf y}\in Y$ one can defined a map $g_{\bf y}:K_X\to \bigcup L_i({\bf y})$ compatible with the nerves of these spaces and depends of $y$ linearly, i.e.
$g_{\lambda {\bf y}_1+\mu {\bf y}_2}=\lambda g_{{\bf y}_1}+\mu g_{{\bf y}_2}$. For any $k$-form $\alpha$ on $L\times Y$ with polynomial coefficients and for any cycle $\gamma\in H_k(K_X)$ one can consider the following function
$F_{\alpha, \gamma}$ on $Y$:
$$F_{\alpha, \gamma}({\bf y}) =  \int_{\gamma} g_{\bf y}^*\alpha.$$

\begin{theorem} The function $F_{\alpha, \gamma}$  is a polynomial function on $Y$.
\end{theorem}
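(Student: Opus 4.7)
The plan is to reduce the claim to a simplex-by-simplex integration where polynomial dependence on $\mathbf{y}$ is manifest. I would begin by representing $\gamma$ by a simplicial $k$-cycle $\sum_\sigma c_\sigma \sigma$ in the barycentric subdivision $BK_X$ (so that covering-compatibility is tested vertex by vertex), and by parametrizing each $k$-simplex $\sigma$ by the standard simplex $\Delta^k$ with barycentric coordinates $t = (t_0, \dots, t_k)$. The essential input from the paragraph preceding the theorem is that $g_{\mathbf{y}}$ depends linearly on $\mathbf{y}$; combined with the fact that on each simplex $g_{\mathbf{y}}$ is determined by affine interpolation from its vertex values, this yields an explicit formula
\[
g_{\mathbf{y}}(t_0, \dots, t_k) = \sum_{j=0}^{k} t_j\, p_j(\mathbf{y}),
\]
where each vertex image $p_j(\mathbf{y}) \in L$ is an affine function of $\mathbf{y}$. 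In particular, $g_{\mathbf{y}}(t)$ is jointly polynomial (indeed, bilinear) in $(t, \mathbf{y})$.

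Next, I would inspect the pullback. Since $\alpha$ is a $k$-form on $L \times Y$ with polynomial coefficients, substituting $x = g_{\mathbf{y}}(t)$ into the coefficients and computing the exterior derivatives of the coordinate functions of $g_{\mathbf{y}}$ (which are linear in $t$ for fixed $\mathbf{y}$) yields, on each simplex,
\[
(g_{\mathbf{y}}^{\,*}\alpha)|_{\sigma} = P_\sigma(t, \mathbf{y})\, dt_1 \wedge \cdots \wedge dt_k,
\]
where $P_\sigma$ is a polynomial in $(t, \mathbf{y})$; only the ``pure-$dt$'' components of the pullback survive after restricting to the slice $\mathbf{y} = \text{const}$. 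Integrating $P_\sigma$ against $dt_1 \cdots dt_k$ over the compact simplex $\Delta^k$ produces a polynomial in $\mathbf{y}$ (the monomial moments $\int_{\Delta^k} t^\mu\, dt$ are rational numbers), and summing the contributions $c_\sigma \int_{\Delta^k} P_\sigma$ over all simplices of the cycle completes the argument.

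The main obstacle is the first step: producing, in a functorial way, an explicit piecewise-affine formula for $g_{\mathbf{y}}$ on each simplex of $BK_X$ that is simultaneously compatible with the coverings (mapping the closed star of $A_i$ into $L_i(\mathbf{y})$) and linear in $\mathbf{y}$. This rests on being able to lift each vertex $A_i$ of $BK_X$ to a point $p_i(\mathbf{y}) \in L_i(\mathbf{y})$ depending affinely on $\mathbf{y}$, which is possible because $L_i(\mathbf{y}) = L_i + y_i$ is an affine subspace moving rigidly with $y_i$. Once this rigidification is in place, the rest is the elementary fact that pulling a polynomial form back by a polynomial map and integrating over a compact simplex yields a polynomial in the external parameters.
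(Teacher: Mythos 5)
Your argument is correct and is essentially the one the paper intends: the paper gives no explicit proof of this theorem, but the surrounding text (the linearity $g_{\lambda\mathbf{y}_1+\mu\mathbf{y}_2}=\lambda g_{\mathbf{y}_1}+\mu g_{\mathbf{y}_2}$ together with the remark in Section 4 that $\int_\Gamma f^*\omega$ is a polynomial function of a piecewise-linear map $f$) reduces the claim to exactly your simplex-by-simplex computation. Your construction of the affine lifts $p_J(\mathbf{y})$ of the barycentric vertices into $\bigcap_{i\in J}L_i(\mathbf{y})$ is the right way to make the paper's asserted existence of a covering-compatible, $\mathbf{y}$-linear $g_{\mathbf{y}}$ explicit, so no gap remains.
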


\section{Homotopy type of the union of affine subspaces}

 We know that the homotopy type of $X=\bigcup L_i$ is the same as the homotopy type of its nerve $K_X$.

 For any finite simplicial complex it is easy to construct a collection of affine subspaces whose nerve is homeomorphic to the given complex. However, if affine subspaces have codimension one in $L$ then there union always has a homotopy type of a wedge of spheres.

Let $\{L_i\}$ be a collection of hyperplanes in $L$. Denote by $l(\{L_i\})$ the biggest subspace parallel to all these hyperplanes. One can check that $l(\{L_i\})$ is equal to the intersection of linear subspaces parallel to the affine spaces $L_i$. For sufficiently general collection of hyperplanes the space $l(\{L_i\})$ is equal to zero.

\begin{theorem}\label{thm7.1} The union $X=\bigcup L_i$ of the affine hyperplanes under conditions $l(\{L_i\})=0$ is homotopy equivalent to the wedge of $(n-1)$-dimensional spheres, which are in one to one correspondence with the convex polyhedra which are boundaries of connected  bounded components of $L\setminus \bigcup L_i$.
 \end{theorem}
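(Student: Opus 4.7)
The plan is to compute $\tilde H_\ast(X)$ via the long exact sequence of the pair $(L,X)$, and then upgrade the resulting homological calculation to a genuine homotopy equivalence using Theorem~6.1. Since $L$ is contractible and $(L,X)$ is a good pair, the LES gives $\tilde H_{k-1}(X)\cong \tilde H_k(L/X)$. I would compute the right-hand side by writing $L/X$ as a wedge $\bigvee_U \bar U/\partial U$ over the chambers $U$ of $L\setminus X$. If $U$ is bounded then $\bar U$ is a convex polytope homeomorphic to $D^n$, so $\bar U/\partial U\cong S^n$. If $U$ is unbounded, the hypothesis $l(\{L_i\})=0$ forces the recession cone $R(U)$ to be pointed: a full line contained in $R(U)$ would lie in every $\tilde L_i$, contradicting $l=0$. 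Choosing $v$ in the relative interior of $R(U)$, the ``first exit'' map $\psi(x)=x-T(x)v$, with $T(x)=\sup\{t\ge 0:x-tv\in\bar U\}$, is a well-defined continuous retraction $\bar U\to\partial U$ (at any boundary point the direction $-v$ points strictly outward). Since $\bar U$ is convex and hence contractible, so is its retract $\partial U$, and the long exact sequence of $(\bar U,\partial U)$ then forces $\bar U/\partial U\simeq *$.

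Combining these contributions, $L/X\simeq \bigvee_{U\text{ bounded}} S^n$, so $\tilde H_{n-1}(X)\cong \mathbb Z^{N_b}$ and $\tilde H_k(X)=0$ for $k\ne n-1$, where $N_b$ is the number of bounded chambers. The hypothesis $l=0$ also yields connectedness of $X$: the normals of the $L_i$ span $L$, so there are at least two distinct parallelism classes, any two non-parallel hyperplanes meet, and the ``intersection'' graph on $\{L_i\}$ is connected. To turn this into a homotopy equivalence, I would use Theorem~6.1 to identify $X$ with its nerve $K_X$, a finite simplicial complex of dimension $\le n-1$. For $n=2$, a connected graph with $H_1\cong \mathbb Z^{N_b}$ is automatically homotopy equivalent to $\bigvee_{N_b} S^1$. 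For $n\ge 3$, a combinatorial argument based on $l=0$ shows $K_X$ is simply connected: any edge-loop $L_{i_0},L_{i_1},\ldots,L_{i_m}=L_{i_0}$ can be filled by 2-simplices using the abundant triple intersections forced by the spanning of the normals. Hurewicz then identifies $\pi_{n-1}(X)$ with $\mathbb Z^{N_b}$, and realizing a basis by maps $S^{n-1}\to X$ assembles into a map $\bigvee_{N_b} S^{n-1}\to X$ that induces an isomorphism on all homology groups, hence is a homotopy equivalence by Whitehead's theorem.

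The principal obstacle is the contractibility of $\bar U/\partial U$ for unbounded chambers $U$: the pointedness of $R(U)$ supplied by $l=0$ is essential, since otherwise chambers of the form $C\times\mathbb R^k$ would produce boundaries carrying nontrivial cycles and the conclusion of the theorem would fail. A secondary but similar difficulty is the simple-connectedness of $K_X$ for $n\ge 3$, which likewise relies on $l=0$ to supply enough higher-order intersections to combinatorially fill edge loops.
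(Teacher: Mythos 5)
The paper itself offers no proof of Theorem~7.1 (it defers to the cited literature), so your argument has to stand on its own. The homological half of your plan is sound: for a finite arrangement $(L,X)$ is a good pair, $L/X\cong\bigvee_U\bar U/\partial U$, and bounded chambers contribute copies of $S^n$. Your treatment of unbounded chambers, however, contains a false step: $\psi(x)=x-T(x)v$ is \emph{not} a retraction onto $\partial U$. At a boundary point $x$ lying on a facet whose affine hull contains $v$, the ray $x-tv$ travels along the boundary before leaving $\bar U$, so $\psi(x)\neq x$; and $\psi$ need not even be surjective onto $\partial U$ (for the chamber $\{0<x_1<1,\;x_2>0\}$ of the arrangement $x_1=0$, $x_1=1$, $x_2=0$ in $\mathbb R^2$ with $v=(0,1)$, the map $\psi$ collapses everything onto the bottom edge). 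The conclusion you need is still true and the map can be repaired: since $\bar U$ is a polyhedron, $T$ is a minimum of finitely many affine functions, hence continuous; and the straight-line homotopy $x-sT(x)v$ keeps boundary points on the boundary (if some point $z$ of that segment were interior, then $x=z+tv$ with $v$ in the recession cone would force $x$ to be interior). Thus $\psi$ and the inclusion are mutually inverse homotopy equivalences, $\partial U\simeq\bar U\simeq *$, and $\bar U/\partial U$ is contractible. So this is a local, fixable error, and the computation $\tilde H_k(X)=0$ for $k\neq n-1$, $\tilde H_{n-1}(X)\cong\mathbb Z^{N_b}$, goes through.

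The genuine gap is the promotion from homology to homotopy type when $n\ge 3$, i.e.\ the simple connectivity of $X$ (equivalently of $K_X$). You assert that edge-loops in $K_X$ ``can be filled by 2-simplices using the abundant triple intersections forced by the spanning of the normals,'' but this is precisely the hard point and the proposed mechanism fails as stated: a triangle in the $1$-skeleton of $K_X$ need not bound a $2$-simplex of $K_X$, because three pairwise non-parallel hyperplanes can have empty common intersection (e.g.\ $x=0$, $y=0$, $x+y=1$ inside the essential arrangement obtained by adjoining $z=0$ in $\mathbb R^3$; there the hollow triangle is filled only by routing through the fourth vertex). Whether every edge-loop can be filled using such auxiliary vertices under the sole hypothesis $l(\{L_i\})=0$ requires a real argument --- say an induction on the hyperplanes via van Kampen applied to $X=X'\cup L_k$, where one must control the possibly disconnected intersections $X'\cap L_k$ --- and you have not supplied it. Until you do, your argument establishes the homology of $X$ but not the claimed homotopy equivalence with a wedge of $(n-1)$-spheres.
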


\begin{corollary}\label{cor7.1} If $l(\{L_i\})$ has dimension $m$, then $X=\bigcup L_i$ has a homotopy type of a wedge of spheres of dimension $n-1-m$.
\end{corollary}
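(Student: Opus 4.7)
The plan is to reduce Corollary~\ref{cor7.1} to Theorem~\ref{thm7.1} by quotienting out the common parallel direction. Write $V := l(\{L_i\})$, a vector subspace of $L$ of dimension $m$, and let $\pi \colon L \to L/V$ be the quotient map, so that $\dim(L/V) = n-m$. By the very definition of $l$, the subspace $V$ is contained in the direction of every $L_i$; hence each hyperplane $L_i$ is a union of cosets of $V$, and one has $L_i = \pi^{-1}(L_i')$ where $L_i' := \pi(L_i)$ is an affine hyperplane in $L/V$. Choosing a complementary vector subspace $H \subset L$ to $V$ and using the resulting splitting $L \cong H \times V$ to identify $L/V$ with $H$, each $L_i$ corresponds to $L_i' \times V$, and therefore $X = X' \times V$ where $X' := \bigcup_i L_i' \subset L/V$. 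Contractibility of $V \cong \mathbb{R}^m$ then gives a homotopy equivalence $X \simeq X'$.

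Next I would check that the family $\{L_i'\}$ in $L/V$ satisfies the hypothesis of Theorem~\ref{thm7.1}, that is, $l(\{L_i'\}) = 0$. Suppose $W \subset L/V$ is a linear subspace parallel to every $L_i'$. Then $\widetilde{W} := \pi^{-1}(W)$ is a linear subspace of $L$ that contains $V$; moreover $\widetilde{W}$ is parallel to every $L_i$, since the vector subspace parallel to $L_i$ is exactly $\pi^{-1}$ of the vector subspace parallel to $L_i'$ (because $V$ itself is parallel to $L_i$). By the maximality of $V = l(\{L_i\})$ we must have $\widetilde{W} \subset V$, hence $\widetilde{W} = V$ and $W = 0$.

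With this hypothesis in hand, Theorem~\ref{thm7.1} applied to $\{L_i'\}$ in the $(n-m)$-dimensional space $L/V$ shows that $X'$ is homotopy equivalent to a wedge of spheres of dimension $(n-m)-1 = n-1-m$, indexed by the bounded components of the complement of $X'$ in $L/V$. Composing with $X \simeq X'$ yields the corollary. The only nontrivial point is the maximality argument establishing $l(\{L_i'\}) = 0$; all remaining steps are formal consequences of the product decomposition $X = X' \times V$ together with the contractibility of $V$, and no new topological input is required beyond Theorem~\ref{thm7.1}.
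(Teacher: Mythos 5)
Your proof is correct and follows essentially the same route as the paper: both arguments split off the common parallel direction $l(\{L_i\})$ as a contractible factor (the paper slices by a transversal $l^\bot$, you quotient by $V$, which amounts to the same product decomposition $X\simeq X'\times V$) and then apply Theorem~\ref{thm7.1} in the $(n-m)$-dimensional space. Your explicit verification that $l(\{L_i'\})=0$ is a detail the paper leaves implicit, but it is not a different idea.
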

\begin{proof}
Indeed, under conditions of Corollary~\ref{cor7.1} $X$ is equal to the product of $X\cap l^\bot \times l(\{L_i\})$ where $l^\bot $ is a space transversal to $l(\{L_i\})$. For the intersection $X\cap l^\bot$ the above theorem is applicable.
\end{proof}

In the assumptions of Theorem~\ref{thm7.1} let us choose a cycle $\Gamma\in H_{n-1} (K_X,\mathbb Z)$. For a point ${\bf y}\in Y$ consider the map $g_{\bf y}:\Gamma\to \bigcup L_i({\bf y})=L({\bf y})$ compatible with coverings.  To this map one can associate the chain $\sum W_{\tau} (U) U$ where $U$ is a connected component of $L\setminus L({\bf y})$ and $W_\tau (U)$ is the winding number of the map $\tau:\Gamma\to U$ where $\tau =\frac{g_{\bf y}-a}{|g_{\bf y}-a|}$ and $a$ is a point in $U$. This chain can be considered as the generalized virtual polyhedron which appeared in the assumption of Theorems~\ref{thm5.1} and~\ref{thm5.2}. In particular, the integral of a form $\omega=Pdx_1\wedge\dots\wedge dx_n$, where $P$ is a polynomial, against such chains depends polynomially on ${\bf y}$.

\section{Applications of generalized virtual polyhedra}
We finish the paper with a brief description of recent applications of generalized virtual polyhedra. Exact statements and details can be found in \cite{7,8}.

Torus manifolds (see \cite{9,10}) provides a wide topological generalization of smooth algebraic toric varieties. To each such manifold  one can associate the union $L(y)$ of hyperplanes in $\mathbb R^n$ depending on parameters $y$ and a $(n-1)$-dimensional cycle $\Gamma$ in the nerve of the natural covering of $L(y)$ \cite{11,12,7}. Using results which are described  in the previous section one can defined a homogeneous degree $n$ polynomial in $y$ which is the volume of corresponding virtual polyhedra. One can describe the cohomology ring of the torus manifold using Khovanskii--Pukhlikov construction, known in toric varieties theory \cite{5}.

On torus manifold there is a special collection of characteristic linear bundles. Such  bundles are in one-to-one correspondence with generalized virtual polyhedra, responsible for the   torus manifold. Intersection number of $n$-sections of such bundles is equal to $n!$ multiplied by mixed volume of the corresponding virtual polyhedra. This theorem generalize BKK-theorem for   torus manifolds. Moreover one can describe the cohomology ring of a bundle whose fibers are torus manifolds in terms of integrals of some polynomials over corresponding virtual polyhedra (see \cite{8}). This theorem generalized the analogous result for bundles with toric fibers \cite{13}.

\end{document}